  \newtheorem{defi}{Definition} %[subsection] Theorem
  \newtheorem{ex}{Example} %[subsection] Theorem
\newcommand{\blank}[1]{\text{[MATH]}}
\newcommand{\vect}[1]{{\boldsymbol #1 }}
\def\bx{\vect{x}}
\def\bz{\vect{z}}
\def\by{\vect{y}}
\def\be{\vect{e}}
\def\bnu{\vect{\nu}}
\def\I{\mathcal{I}}
\def\S{\mathcal{S}}
\def\N{{\mathcal{N}}}
\def\F{{\mathcal{F}}}
\def\T{\mathcal{T}}
\def\H{\mathcal{H}}
\def\Q{\mathcal{R}}
\def\C{\mathcal{C}}
\newcommand{\norm}[1]{\|#1\|}
\def\R{{\mathbb R}}
\newcommand{\mb}[1]{\mbox{\boldmath $#1$}}
\newcommand{\ba}{\begin{array}}
\newcommand{\ea}{\end{array}}
\newcommand{\BigO}[1]{\ensuremath{\operatorname{O}\bigl(#1\bigr)}}
\newcommand{\argmax}{\mathop{\rm argmax}}
\newcommand{\refs}[1]{$(\ref{#1})$}
\def\bnu{\vect{\nu}}
\renewcommand{\qed}{\hfill\blacksquare}
\begin{document}
\title{Simplifying the Kohlberg Criterion on the Nucleolus}  % Declares the document's title.\emph{With Minimal Characteristic Sets}
\author{Tri-Dung Nguyen}

\institute{Mathematical Sciences and Business School, University of Southampton, Southampton SO17 1BJ, United Kingdom \\
\email{ T.D.Nguyen@soton.ac.uk}\\}

\titlerunning{Simplifying Kohlberg Criterion on the Nucleolus}% With Minimal Characteristic Sets}
\maketitle                 % Produces the title.

\begin{abstract}
The nucleolus offers a desirable payoff-sharing solution in cooperative games thanks to its attractive properties - it always exists and lies in the core (if the core is non-empty), and is unique. Although computing the nucleolus is very challenging, the Kohlberg criterion offers a powerful method for verifying whether a solution is the nucleolus in relatively small games (i.e., the number of players $n \leq 20$). This, however, becomes more challenging for larger games because of the need to form and check the balancedness of possibly exponentially large collections of coalitions, each collection could be of an exponentially large size. We develop a simplifying set of the Kohlberg criteria that involves checking the balancedness of at most $(n-1)$ sets of coalitions. We also provide a method for reducing the size of these sets and a fast algorithm for verifying the balancedness.

 \begin{keywords}
Nucleolus; cooperative game; Kohlberg criterion.
\end{keywords}

\end{abstract}

\section{Introduction}
%\subsection{Cooperative game theory and stable solution concepts}\label{sec:review}
Cooperative games model situations where players can form coalitions to jointly achieve some objective. Once such example is where entrepreneurs, with possibly complementary skills, consider running a business together. Assuming that the entrepreneurs can jointly run a more successful business than by working individually (or in smaller groups), a natural question is how to divide the reward among the players in such a way that could avoid any subgroup of players to breaking away from the grand coalition in order to increase the total payoff. Solution concepts in cooperative games provide the means to achieving this.

Formally, let $n$ be the number of players and $\N = \{1,2,\ldots,n\}$ be the set of all the players. A \emph{coalition} $\S$ is a subset of the players; i.e., $\S \subseteq \N$. %Let $\C \equiv 2^\N$ be the set of all the possible coalitions.
 The \emph{characteristic function} $v~:~2^\N \mapsto \R$ maps each coalition to a real number with $v(\S)$ representing the payoff that coalition $\S$ is guaranteed to receive if all players in $\S$ collaborate, despite the actions of the other players. A solution (also called a payoff distribution) of the game $\bx = (x_1,x_2,\ldots,x_n)$ is a way to distribute the reward among the players, with $x_i$ being the share for player $i$.

Given the total payoff $v(\N)$ of the grand coalition, we are interested in \emph{efficient} solutions $\bx$ which satisfy $\sum_{i \in \N} x_i = v(\N)$. Let us denote $\bx(\S) = \sum_{i \in \S} x_i$. For each imputation $\bx$, the \emph{excess value} of a coalition $\S$ is defined as $d(\S, \bx) := v(\S) - \bx(\S)$ which can be regarded as the level of dissatisfaction the players in coalition $\S$ feel over the proposed solution $\bx$. Here, we concern with profit games and assume that it is more desirable to have higher shares. All the results can be extended to cost games either through transforming the characteristic function to the corresponding profit games or by redefining the excess values.

Player $i$ is considered rational if he/she only accepts a share $x_i$ of at least the amount $v(\{i\})$. A group of players $\S$ is considered rational if it only accepts a total share $\bx(\S) := \sum_{i \in \S} x_i$ of at least the amount $v(\S)$ that the group is guaranteed to receive by breaking away from the grand coalition and forming its own coalition; i.e., $d(\S, \bx) \leq 0$.

An \emph{imputation} is an efficient solution that satisfies \emph{individual rationality}; that is, $x_i \geq v(\{i\}), \forall i \in \N$. The \emph{core} of the game is the set of all efficient solutions $\bx$ such that no coalition has the incentive to break away -- i.e., satisfying \emph{group rationality} -- and hence the solutions are stable. It is, however, possible that there is no solution satisfying this set of conditions, and the core might not exist. In that case, we consider alternative solutions that, although not stable, are least susceptive to deviations. The first such solution concept is called the \emph{least core}, which minimizes the worst level of dissatisfaction among all the coalitions. Note that the least core always exists but might not be unique. We denote $\mathbf{I}$ as the imputation set and $\mathbf{Co}$ as the core of the game.

Among all solutions in the least core, if we also ensure not only the worst dissatisfaction level but also all the dissatisfactions to be lexicographically minimized, we arrive at the concept of the \emph{nucleolus} which is the `most stable' solution in the imputation set. Formally, for any imputation $\bx$, let $\Theta(\bx) =(\Theta_1(\bx),\Theta_2(\bx),\ldots,\Theta_{2^n}(\bx))$ be the vector of all the $2^n$ excess values at $\bx$ sorted in a non-increasing order; i.e., $\Theta_i(\bx) \geq \Theta_{i+1}(\bx)$ for all $1 \leq i < 2^n$. Let us denote $\Theta(\bx) <_L \Theta(\by)$ if there exists $r \leq 2^n$ such that $\Theta_i(\bx) = \Theta_i(\by),\forall 1 \leq i < r$ and $\Theta_r(\bx) < \Theta_r(\by)$. Then $\bnu \in \mathbf{I}$ is the \emph{nucleolus} if $\Theta(\bnu) <_L \Theta(\bx),~\forall \bx \in \mathbf{I}, ~\bx \neq \bnu$. If we relax the condition $\bx, \bnu \in \mathbf{I}$, we arrive at the definition of the prenucleolus.

The nucleolus is one of the most important solution concepts for cooperative games with transferable utilities, and %. It represents a way to distribute the reward (or cost) among the players involved in a way that lexicographically minimizes the excess values (i.e. dissatisfaction levels) of all coalitions. The nucleolus
was introduced in 1969 by \citet{schmeidler1969nucleolus} as a solution concept with attractive properties - it always exists (if the imputation is non-empty), it is unique, and it lies in the core (if the core is non-empty). Despite the desirable properties that the nucleolus has, its computation is, however, very challenging because the process involves the lexicographical minimization of $2^n$ excess values, where $n$ is the number of players. There are a small number of games whose nucleoli can be computed in polynomial time (e.g., \citet{solymosi1994algorithm}, \citet{hamers2003nucleolus}, \citet{solymosi2005computing}, \citet{potters2006nucleolus,deng2009finding,kern2009core}). It has been shown that finding the nucleolus is NP-hard for many classes of games such as the utility games with non-unit capacities (\citet{deng2009finding}) and the weighted voting games (\citet{elkind2007computational}).

%Throughout this manuscript, we denote $\bnu$ as the nucleolus and $\I$ as the imputation set. We assume that $\I$ is non-empty. However, the calculation of the prenucleolus can be done using the same method except that we relaxed the constraint $\bx \in \I$ in calculating the nucleolus. The algorithm for finding the nucleolus presented in this paper involves solving several sub-problems. These sub-problems are referred to throughout the paper by a list of acronyms that are included in Appendix A.% \ref{app:notations}.

\citet{Kopelowitz1967Computation} suggests using nested linear programming (LP) to compute the kernel of a game. This encouraged a number of researchers to study the computation of the nucleolus using linear programming. For example, \citet{kohlberg1972nucleolus} presents a single LP with $\BigO{2^n!}$ constraints which later on is improved by \citet{owen1974note} with $\BigO{4^n}$ constraints (at the cost of having larger coefficients). \citet{puerto2013finding} recently introduces a different single LP formulation with $\BigO{4^n}$ constraints and $\BigO{4^n}$ decision variables and with coefficients in $\{-1,0,1\}$. The nucleolus can also be found by solving a sequence of LPs. However, either the number of LPs involved is exponentially large (\citet{maschler1979geometric}, \citet{sankaran1991finding}) or the sizes of the LPs are exponential (\citet{potters1996computing}, \citet{derks1996implementing}, \citet{Nguyen2016}).

While finding the nucleolus is very difficult as shown in the aforementioned literature, \citet{kohlberg1971nucleolus} provides a necessary and sufficient condition for a given imputation to be the nucleolus as is described in the subsequent section. This set of criteria is particularly useful in relatively small games (e.g., less than 10 players) or in larger games with special structures which allow us to take an educated guess on the nucleolus. The verification of the criterion, however, becomes time consuming when the number of players exceeds 15, and becomes almost impossible in general cases when the number of players exceeds 25. This is because the criterion requires forming the sets of coalitions of all $2^n$ possible coalitions and iteratively verifying if unions of these sets are balanced. This work aims to resolve these issues and proposes a new set of simplifying criteria.

The key contributions of our work include the following:
\begin{itemize}
\item We present a new set of necessary and sufficient conditions for a solution to be the nucleolus in Section~\ref{subsec:simplifiedKohlberg}. The number of subsets of coalitions to check for balancedness is at most $(n-1)$ (instead of exponentially large). %\emph{is equal to the number of LPs in the nested LPs and}
    \item The balancedness condition is essentially equivalent to solving a linear program with strict inequalities which are often undesirable in mathematical programming. We provide a solution to this in Section~\ref{subsec:balancedness_checking}.
    \item On checking the Kohlberg criterion, we might end up having to store an exponentially large number of coalitions. We provide a method for reducing this to the size of at most $n(n-1)$ in Section~\ref{subsec:simplifiedKohlberg2}.
%\item We show that the bound of $(n+k-1)$ turns out to be the exact size of the corresponding characteristic set. We also show that the set of coalitions used in the simplifying Kohlberg criterion is indeed a `minimal' characteristic set.
%\item Provide application of sensitivity analysis of the nucleolus.
\end{itemize}

%This section provides only a brief review of cooperative game theory. We refer the interested readers to \citet{peleg2007introduction,chalkiadakis2011computational} for more thorough introduction of the topic.

%\subsection{The Kohlberg criterion for verifying the nucleolus}\label{subsec:Kohlberg}
\section{The Kohlberg criterion for verifying the nucleolus}\label{subsec:Kohlberg}
For each efficient payoff distribution $\bx$, \citet{kohlberg1971nucleolus} first defines the following sets of coalitions: $T_0(\bx) = \{\{i\},i=1,\ldots,n~:~x_i = v(\{i\})\}$, $H_0(\bx) = \{\N,\emptyset\}$ and $H_k(\bx) = H_{k-1}(\bx) \cup T_k(\bx),k=1,2,\ldots,$ where for each $k \geq 1$,
\begin{eqnarray}
  T_k(\bx) &=& \displaystyle \argmax_{\S \not \in H_{k-1}(\bx)} \left\{v(\S) - x(\S)\right\},\quad \epsilon_k(\bx) = \displaystyle \max_{\S \not \in H_{k-1}(\bx)} \left\{v(\S) - x(\S)\right\}. \nonumber
\end{eqnarray}

Here, $T_k(\bx)$ includes all coalitions that have the same excess value $\epsilon_k(\bx)$ and $\epsilon_1(\bx) > \epsilon_2(\bx) >\ldots$. The terms `collection of coalitions' and `subset of the powerset $2^\N$' are equivalent and used interchangeably in this paper. We also use the terms `collection' and `subset' as their shorter versions.

For each collection $Q \subseteq 2^\N$, let us denote $|Q|$ as the size of $Q$. We associate each collection $Q$ with a weight vector in $\R^{|Q|}$ with each element denoting the weight of the corresponding coalition in $Q$. Throughout this paper, we use bold fonts for vectors and normal font for scalars.

Let us denote $\be(\S),\S \in \N$, as a binary vector in $\R^n$ with the $i$th element equal to one if and only if player $i$ is in the coalition. With this, for all $\bx \in \R^n$, we have $\bx(\S) = \sum_{i \in \S} x_i = \bx^T \be(\S)$. The concept of balancedness is defined as follows:

\begin{defi}
A collection of coalitions $Q\subseteq 2^N$ is balanced if there exists a weight vector $\mb{\omega} \in \R^{|Q|}_{>0}$ such that $\be(\N) = \sum_{\S \in Q} \omega_\S \be(\S)$.
\end{defi}

\begin{defi}
Given a collection $T_0 \subseteq 2^N$, a collection $Q \subseteq 2^N$ is called $T_0$-balanced if there exist weight vectors $\mb{\gamma} \in \R^{|T_0|}_{\geq 0}$ and $\mb{\omega} \in \R^{|Q|}_{>0}$ such that $\be(\N) = \sum_{\S \in T_0} \gamma_S \be(\S) + \sum_{\S \in T} \omega_S \be(\S)$.
\end{defi}

Remarks: 
\begin{itemize}
\item Note that when $T_0 = \emptyset$, the concept of $T_0$-balanced is equivalent to the usual balancedness concept. 
\item All results in this paper concern with finding the nucleolus. These results and the algorithms can be simplified to finding the pre-nucleolus by setting $T_0 = \emptyset$.
\end{itemize}

For any collection $Q$ of coalitions, let us define
$$Y(Q) = \left\{ \by\in \R^n~:~\by(\S) \geq 0~\forall \S \in Q,~ \by(N) = 0\right\}.$$
We have $Y(Q) \neq \emptyset$ since $\mb{0} \in Y(Q)$. The first key result in \citet{kohlberg1971nucleolus} that will be exploited in this work is the following lemma:

\begin{lemma}[\citet{kohlberg1971nucleolus}] \label{lemma:Kohlberg}
Given a collection $T_0 \subseteq 2^N$, a collection $T\subseteq 2^N$ is $T_0$-balanced if and only if $\by \in Y(T_0 \cup T)$ implies $\by(\S)=0,\forall \S \in T$.
\end{lemma}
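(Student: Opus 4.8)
The plan is to prove the two implications separately. The forward implication (that $T_0$-balancedness forces the vanishing property on $Y(T_0\cup T)$) is an immediate sign argument, whereas the converse is a Farkas-type theorem of the alternative whose only delicate point is extracting the \emph{strict} positivity of the weights $\omega$.

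\textbf{Forward direction.} Suppose $T$ is $T_0$-balanced, so that $\be(\N)=\sum_{\S\in T_0}\gamma_\S\be(\S)+\sum_{\S\in T}\omega_\S\be(\S)$ with $\gamma\ge\bo$ and $\omega>\bo$. Take any $\by\in Y(T_0\cup T)$. Taking the inner product of this identity with $\by$ and using $\by(\N)=\be(\N)^T\by=0$ gives $0=\sum_{\S\in T_0}\gamma_\S\by(\S)+\sum_{\S\in T}\omega_\S\by(\S)$. Each summand is nonnegative, since $\by(\S)\ge 0$ for $\S\in T_0\cup T$ while $\gamma,\omega\ge\bo$, so each vanishes; because $\omega_\S>0$ strictly, this forces $\by(\S)=0$ for every $\S\in T$.

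\textbf{Backward direction.} Assume now that $\by\in Y(T_0\cup T)$ implies $\by(\S)=0$ for all $\S\in T$. The key reformulation is that, since every $\by\in Y(T_0\cup T)$ already satisfies $\by(\S)\ge 0$ for $\S\in T$, this hypothesis is equivalent to the single assertion that the linear functional $\by\mapsto -\sum_{\S\in T}\by(\S)$ is nonnegative on the polyhedral cone $Y(T_0\cup T)$ (the aggregate is always $\ge 0$ there, so it being $\le 0$ is the same as each term being $0$). Now $Y(T_0\cup T)$ is the polyhedral cone cut out by the equality $\be(\N)^T\by=0$ together with the inequalities $\be(\S)^T\by\ge 0$ for $\S\in T_0\cup T$. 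By Farkas' lemma (the Minkowski--Weyl duality for polyhedral cones), a functional is nonnegative on this cone precisely when its gradient lies in the cone generated by the constraint normals, with the equality contributing a free multiplier. Hence there exist $\mu_\S\ge 0$ ($\S\in T_0\cup T$) and $\lambda\in\R$ with $-\sum_{\S\in T}\be(\S)=\lambda\be(\N)+\sum_{\S\in T_0\cup T}\mu_\S\be(\S)$, which I rearrange into $-\lambda\,\be(\N)=\sum_{\S\in T_0}\mu_\S\be(\S)+\sum_{\S\in T}(1+\mu_\S)\be(\S)$.

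\textbf{Extracting strictness.} The right-hand side is a nonnegative combination of the nonnegative vectors $\be(\S)$ in which every coefficient attached to a coalition of $T$ equals $1+\mu_\S\ge 1$. Reading off any coordinate $i$ that belongs to some nonempty $\S\in T$, the right-hand side is at least $1+\mu_\S>0$ there, while the left-hand side equals $-\lambda$; hence $-\lambda>0$. Dividing through by $c_0:=-\lambda>0$ and setting $\gamma_\S=\mu_\S/c_0\ge 0$ for $\S\in T_0$ and $\omega_\S=(1+\mu_\S)/c_0>0$ for $\S\in T$ yields $\be(\N)=\sum_{\S\in T_0}\gamma_\S\be(\S)+\sum_{\S\in T}\omega_\S\be(\S)$ with $\gamma\ge\bo$ and $\omega>\bo$, which is exactly $T_0$-balancedness.

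The step I expect to be the main obstacle is precisely this last one: a plain Farkas argument only delivers nonnegative multipliers, while balancedness demands strict positivity of $\omega$. Rewriting the hypothesis in terms of the aggregate functional $-\sum_{\S\in T}\be(\S)$ is what makes the ``$+1$'' appear in each $T$-coefficient and thereby guarantees strictness for free. One should also record the mild nondegeneracy assumption $\sum_{\S\in T}\be(\S)\neq\bo$ (equivalently, $T$ contains a nonempty coalition, which always holds in Kohlberg's construction since $\emptyset\in H_0$); without it the scalar $\lambda$ could vanish and the final normalization by $c_0$ would fail.
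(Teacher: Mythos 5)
Your proof is correct, and it takes a genuinely different route from the paper's own argument, which appears in Appendix~A. The paper proves this equivalence by setting up, for each coalition $C \in T$, a primal--dual pair of linear programs: the primal $P(C)$ maximizes $\omega_C$ over nonnegative multipliers satisfying the homogenized balancedness equation, while the dual $D(C)$ asks for $\by \in Y(T_0 \cup T)$ with $\by(C) \geq 1$. Balancedness is then shown equivalent to every $P(C)$ being unbounded, which by LP duality is equivalent to every $D(C)$ being infeasible, which by a scaling argument is exactly the stated $Y$-property; crucially, the strict positivity of $\omega$ is obtained there by \emph{averaging} the $|T|$ feasible solutions, one per coalition $C \in T$. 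You instead invoke Farkas' lemma exactly once, applied to the aggregate functional $\by \mapsto -\sum_{\S \in T}\by(\S)$, and strictness comes for free from the ``$+1$'' that this aggregation plants on every $T$-coefficient --- no family of LPs and no averaging step. Your route is shorter and more self-contained; the paper's route buys an algorithmic payoff, since the per-coalition programs $P(C)$ are precisely the linear programs that reappear in Section~3.2 (the Solymosi--Sziklai-style check and Algorithm~3) for testing balancedness computationally. Your closing caveat is also well taken: as literally stated for arbitrary collections, the lemma fails when $T$ contains no nonempty coalition (e.g.\ $T = \emptyset$ with $\be(\N)$ outside the cone generated by $T_0$, where the $Y$-condition is vacuous but balancedness fails), and both your argument and the paper's implicitly rely on this nondegeneracy, which is harmless in Kohlberg's construction since $\emptyset$ and $\N$ are placed in $H_0$.
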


This result allows the author to define two sets of equivalent properties on a sequence of collections $(Q_0,Q_1,\ldots)$ as:
\begin{defi}
$(Q_0,Q_1,\ldots)$ has Property I if for all $k \geq 1$, the following claim holds: $\displaystyle \by \in Y(\cup_{j=0}^k Q_j)$ implies $\displaystyle \by(\S) = 0, ~\forall \S \in \cup_{j=1}^k Q_j$.
\end{defi}

\begin{defi}
$(Q_0,Q_1,\ldots)$ has Property II if for all $k \geq 1$, $\displaystyle \cup_{j=1}^k Q_j$ is $Q_0$-balanced.
\end{defi}

The main result in \citet{kohlberg1971nucleolus} can be summarized in the following theorem:

\begin{theorem}[\citet{kohlberg1971nucleolus}] \label{theorem:Kohlberg}
The following three claims are equivalent:
(a) $\bx$ is the nucleolus; (b) $(T_0(\bx),T_1(\bx),\ldots)$ has Property I; and (c) $(T_0(\bx),T_1(\bx),\ldots)$ has Property II.%\footnote{The original results shown by \citet{kohlberg1971nucleolus} is for the nucleolus but these can be modified to the prenucleolus.}.
\end{theorem}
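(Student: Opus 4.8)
My plan is to separate the routine equivalence (b)$\iff$(c), which follows directly from Lemma~\ref{lemma:Kohlberg}, from the substantive equivalence (a)$\iff$(b), which I will route through a local--optimality characterization of the nucleolus. For (b)$\iff$(c), fix $k\ge 1$ and apply Lemma~\ref{lemma:Kohlberg} with the roles $T_0:=Q_0=T_0(\bx)$ and $T:=\cup_{j=1}^k Q_j$; then $T_0\cup T=\cup_{j=0}^k Q_j$, and the lemma reads ``$\cup_{j=1}^k Q_j$ is $Q_0$-balanced $\iff$ [$\by\in Y(\cup_{j=0}^k Q_j)$ implies $\by(\S)=0$ for all $\S\in\cup_{j=1}^k Q_j$]''. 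The left-hand side is Property II at level $k$ and the right-hand side is Property I at level $k$; since this holds for every $k$, the two properties coincide, giving (b)$\iff$(c).

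The core is (a)$\iff$(b). Here the idea is to read $Y(\cup_{j=0}^k Q_j)$ as the cone of \emph{feasible improving directions}: a vector $\by$ with $\by(\N)=0$ and $y_i\ge 0$ for every $\{i\}\in T_0$ keeps $\bx+t\by$ in the imputation set $\mathbf{I}$ for small $t>0$, and the excess of each $\S$ moves to $d(\S,\bx)-t\by(\S)$. I will prove two local lemmas. \textbf{Lemma A (direction analysis).} For such a feasible $\by$, one has $\Theta(\bx+t\by)<_L\Theta(\bx)$ for all sufficiently small $t>0$ if and only if there is an index $m$ with $\by\in Y(\cup_{j=0}^m Q_j)$ and $\by(\S)>0$ for some $\S\in\cup_{j=1}^m Q_j$. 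The ``if'' direction descends the excess levels $\epsilon_1>\epsilon_2>\cdots$: as $\by=0$ on $T_1,\dots,T_{m-1}$ and $\by\ge 0$ with a strict coordinate on $T_m$, the blocks above level $m$ are frozen while block $m$ strictly shrinks in value or in cardinality, and a small enough $t$ keeps every coalition below level $m$ below it by continuity. The ``only if'' direction is the crux: were the top block to rise anywhere, i.e. $\by(\S)<0$ for some $\S\in T_1$, the largest excess would increase and $\Theta$ would grow, so a \emph{locally} improving direction is forced to satisfy $\by\ge 0$ on $T_1$, and inductively on each successive block until the first block $m$ carrying a strictly positive coordinate, which places $\by$ in $Y(\cup_{j=0}^m Q_j)$. \textbf{Lemma B (convexity).} If $\bx'\in\mathbf{I}$ satisfies $\Theta(\bx')<_L\Theta(\bx)$, then $\by:=\bx'-\bx$ is a feasible direction with $\Theta(\bx+t\by)<_L\Theta(\bx)$ for all $t\in(0,1]$. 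This holds because each excess is affine, so $d(\S,\bx+t\by)=(1-t)d(\S,\bx)+t\,d(\S,\bx')$; walking down the blocks, a coalition attains level $\epsilon_j$ at $\bx+t\by$ only if it attains $\epsilon_j$ at both $\bx$ and $\bx'$, so the first block at which $\bx'$ carries fewer coalitions than $\bx$ is also the first block at which $\bx+t\by$ strictly drops.

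With these in hand the theorem closes. For (a)$\Rightarrow$(b): the nucleolus is the unique lexicographic minimum over $\mathbf{I}$, hence a local minimum, so by the ``if'' part of Lemma A no feasible direction lies in any $Y(\cup_{j=0}^m Q_j)$ with a strictly positive coordinate on $\cup_{j=1}^m Q_j$; this is exactly Property I. For (b)$\Rightarrow$(a): if $\bx$ were not the nucleolus there would be $\bx'\in\mathbf{I}$ with $\Theta(\bx')<_L\Theta(\bx)$, and Lemma B would produce a locally improving feasible direction, which by the ``only if'' part of Lemma A witnesses a failure of Property I at some level $m$, contradicting (b). I expect the main obstacle to be the ``only if'' half of Lemma A, namely the bookkeeping that turns a globally better imputation into a \emph{monotone}, cone-respecting improving direction: one must argue block by block that local improvement forbids any excess in a higher block from rising, while simultaneously controlling, via small $t$ and the finiteness of $2^\N$, the coalitions sitting strictly below the active block together with the fixed zero-excess coalitions $\N$ and $\emptyset$.
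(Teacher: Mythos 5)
Your proposal is correct in outline, but it takes a genuinely different route from the paper for the substantive part, and it is worth being precise about where. For (b)$\iff$(c) you and the paper coincide in substance: you invoke Lemma~\ref{lemma:Kohlberg} level by level, while the paper's own stated proof of this theorem (Appendix~A, via the primal--dual pair $P(C)$, $D(C)$) is essentially a proof of that lemma by LP duality --- balancedness $\iff$ unbounded primal $\iff$ infeasible dual $\iff$ the $Y(\cdot)$ implication. The real divergence is in (a)$\iff$(b), which the paper does not re-prove for Theorem~\ref{theorem:Kohlberg} at all (it defers to \citet{kohlberg1971nucleolus}); the closest in-paper analogue is the proof of Theorem~\ref{thm:algo1_correctness}. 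There, the direction ``nucleolus $\Rightarrow$ balanced'' is argued exactly as you do (a violating $\by$ supplied by Lemma~\ref{lemma:Kohlberg} is scaled by a small $\delta>0$ into a lexicographic improvement, contradiction), but the converse ``balanced $\Rightarrow$ nucleolus'' is handled quite differently: the paper takes the nucleolus $\bz$, observes $(\bz-\bx)(\S)\ge 0$ on the first tight block, and applies Lemma~\ref{lemma:Kohlberg} recursively to force $(\bz-\bx)(\S)=0$ block by block until a spanning family gives $\bz=\bx$. You instead route this direction through your Lemma~B (lexicographic improvement is inherited along the whole segment toward a better imputation) followed by the ``only if'' half of Lemma~A. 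Both work, and both ultimately invoke existence of the nucleolus; the paper's route is shorter because it never needs anything like Lemma~B, while yours gives a cleaner conceptual picture (the nucleolus as the imputation admitting no feasible descent direction, a first-order optimality statement).

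The one caveat is your justification of Lemma~B, which as written is not quite right: ``a coalition attains level $\epsilon_j$ at $\bx+t\by$ only if it attains $\epsilon_j$ at both $\bx$ and $\bx'$'' is false for arbitrary coalitions (a coalition sitting above $\epsilon_j$ at $\bx$ and below it at $\bx'$ can average to exactly $\epsilon_j$), and ``the first block at which $\bx'$ carries fewer coalitions than $\bx$'' need not be where the drop occurs (the top blocks of $\bx$ and $\bx'$ can have equal cardinality but different membership, in which case $\bx+t\by$ drops strictly \emph{earlier}). The repair is the standard recursive block-matching argument underlying Schmeidler's uniqueness proof: at the current top level, the coalitions whose excess stays at that level along the segment are exactly those attaining it at \emph{both} endpoints; if that common set is a proper subset of the current top block of $\bx$, strict lexicographic improvement already occurs at this level; otherwise the top blocks of $\bx$ and $\bx'$ coincide as sets, are frozen along the segment, and can be deleted before recursing on the remaining coalitions. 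With that fix, Lemma~B --- and hence your whole argument --- is sound.
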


For completeness, we show a slightly different version and proof of Theorem~\ref{theorem:Kohlberg} by Lemma~\ref{lemma:balancedness2} in Appendix~A.%\ref{app:Kohlberg_thm_restate}.

%\subsection*{Algorithmic Representation of Kohlberg Criterion}\label{subsec:Kohlberg_Algo}
To appreciate the practicality of the Kohlberg criterion and for convenience in the later development, we first present the algorithmic view of the Kohlberg criterion in Algorithm~\ref{alg:Kohlberg}.

%\vspace{-.2in}
\begin{algorithm}[!h]
\textbf{Input}: Game $G(N,v)$, imputation solution $\bx$\;
\textbf{Output}: Conclude if $\bx$ is the nucleolus\;
\textbf{1}. Initialization: Set $H_0 = \{\be_N,\emptyset\}$, $T_0 = \{\{i\},i=1,\ldots,n~:~x_i = v(\{i\})\}$ and $k=1$\;
\While{$\H_{k-1} \neq 2^\N$}{
        \textbf{2}. Set $T_k = \displaystyle \argmax_{\S \not \in H_{k-1}} \left\{v(\S) - \bx(\S)\right\}$\;
        \eIf{$(\displaystyle \cup_{j=1}^k T_j)$ is $T_0$-balanced} {\textbf{3}. Set $H_k=H_{k-1} \cup T_k$, $k=k+1$ and continue} { \textbf{4}. Stop the algorithm and conclude that $\bx$  \textbf{is not} the nucleolus}
        }
\textbf{5}. Conclude that $\bx$ \textbf{is} the nucleolus.
\caption{Kohlberg Algorithm for verifying if a solution is the nucleolus of a cooperative game.}
\label{alg:Kohlberg}
\end{algorithm}
%\vspace{-.2in}

In this algorithm, we iteratively form the tight sets $T_j,j=0,1,\ldots$ until either all the coalitions are included and we conclude the input solution is the nucleolus (i.e., stopping at step 5) or stop at a point where the union of the tight coalitions is not $T_0$-balanced (in step 4), in which case we conclude that the solution is not the nucleolus. To demonstrate the Kohlberg criterion, we consider the following simple three-player cooperative game:
\begin{ex}\label{ex1}
Let the characteristic function be: $v(\{1\}) = 1,~ v(\{2\}) = 1,~ v(\{3\}) = 1$, $v(\{1,2\}) = 7$, $v(\{1,3\}) = 4$, $v(\{2,3\}) = 5$, $v(\{1,2,3\}) = 12$. The set of all imputations is: $\mathbf{I} =\{ (x_1, x_2, x_3) ~:~ x_1 + x_2+x_3 = 12,~ x_1 \geq 1,~x_2 \geq 1,~x_3 \geq 1\},$ and the core of the game is:
 \begin{eqnarray*}
\mathbf{Co} =\{ (x_1, x_2, x_3) : && x_1 + x_2+x_3 = 12,~x_1 \geq 1,~x_2 \geq 1, ~x_3 \geq 1,\\
&&x_1 + x_2 \geq 7,~x_1 + x_3 \geq 4, ~x_2 + x_3 \geq 5\}.%,
\end{eqnarray*}
%and is shown in the shaded area in Figure \ref{fig:demonstration}.

%The distance represent the negative dissatisfaction (or the satisfaction). Maximize the smallest distance (satisfaction). This is equivalent to finding the largest incurbant ball within the core polytop. The least core is the set of all the centres of these optimal balls (there could be multiple of these).

The least core is the line segment connecting $\bx=(5,4,3)$ and $\by=(3,6,3)$. The nucleolus is $\bnu = (4,5,3)$. At the nucleolus $\bnu$, we can find $T_0 = \emptyset$, $T_1 = \{\{1,2\}\},\{3\}\}$, $T_2 = \{\{1\}\},\{2,3\},\{1,3\}\}$,  $\epsilon_1 = -2$, $\epsilon_2 = -3$, and $\epsilon_3 = -4$. Here, $T_1$ is $T_0$-balanced with a weight $\mb{\omega} = (1,1)$. Similarly, $(T_1 \cup T_2)$ is $T_0$-balanced with a weight $\mb{\omega} = (1/2,1/4,1/4,1/2,1/4)$. We can also verify that the Kohlberg criterion does not hold for any $\bx' \neq \bnu$. For example, let $\bx' = 1/2(\bx+\bnu)$. Then $T_1 = \{\{1,2\}\},\{3\}\}$ and $T_2 = \{\{2,3\}\}$. Although $(T_1)$ is $T_0$-balanced, $(T_1 \cup T_2)$ is not.

\end{ex}

\section{The simplifying Kohlberg criterion}\label{sec:simplifiedKohlberg}
%Several ideas can be developed from Example~\ref{ex2}. First, once we have obtained $T_1$ and $T_2$ and have verified the balancedness of $T_1$, $(T_1 \cup T_2)$, the balancedness of all other subsequent sets $(\cup_{j=1}^k T_j)$ can be easily verified. In fact, we do not have to concern about all those coalitions that belong to $span(T_1)$. We will exploit this idea in Section~\ref{subsec:simplifiedKohlberg} to bound the number of iterations to $(n-1)$. Third, $T_2$ has more coalition than needed. We only need to take a subset of this.

The Kohlberg criterion offers a powerful tool to assess whether a given payoff distribution is the nucleolus by providing both the necessary and sufficient conditions. This often arises in relatively small or well-structured games where a potential candidate for the nucleolus can be easily identified and where checking the balancedness of the corresponding tight sets can be done easily (possibly analytically). For larger games, it is inconvenient to apply the Kohlberg criterion because this could involve forming and checking for the balancedness of an exponentially large number of subsets of tight coalitions, each of which could be of exponentially large size. This section aims to resolve these issues.

\subsection{Bounding the number of iterations to $(n-1)$}\label{subsec:simplifiedKohlberg}
%In Example~\ref{ex2}, notice that once we have obtained $T_1$ and $T_2$ and have verified the balancedness of $T_1$, $(T_1 \cup T_2)$, the balancedness of all other subsequent sets $(\cup_{j=1}^k T_j)$ can be easily verified. In fact, we do not have to be concerned about all those coalitions that belong to $span(T_1)$. Instead, we exploit this idea to bound the number of iterations to $(n-1)$. The key idea in here is that, if a collection is $T_0$-balanced, then its span is also $T_0$-balanced as is formalized in the following lemma:

On using linear algebra operators on the collection of coalitions, we slightly abuse the notations and refer each coalition $\S \in 2^{\N}$ interchangeably with its binary vector $\be(\S)$ indicating whether the players are in the coalition. For each collection of coalitions $T$, let us denote $rank(T)$ as the rank of the coalitions in $T$ and $span(T)$ as the collection of all coalitions that lie in the linear span of the coalitions in $T$. The key idea in simplifying the Kohlberg criterion is to note that, once we have obtained and verified the $T_0$-balancedness of $\cup_{j=1}^k T_j$, we do not have to be concerned about all those coalitions that belong to $span(\cup_{j=1}^k T_j)$. In brief, this is because once a collection is $T_0$-balanced, its span is also $T_0$-balanced as is formalized in the following lemma:

%In the earlier form of the nested LP formulations for finding the nucleolus (e.g. in \citep{Kopelowitz1967Computation},\citep{sankaran1991finding}), the number of LPs involved could be exponentially large in examples such as the one shown in Section...\cite{potters1996computing,derks1996implementing}, \cite{Nguyen2015} reformulate the nested LPs and to show that the total number of large LPs that we need to solve is at most $n-1$. We will use this analogy to bound the number of iterations to at most $(n-1)$.

\begin{lemma}\label{lemma:balancedness} From any collection $T_0 \subseteq 2^\N$, the following results hold:
\begin{itemize}
\item[(a)] If a collection $T$ is $T_0$-balanced, then $span(T)$ is also $T_0$-balanced.
\item[(b)] If collections $U,V$ are $T_0$-balanced then $U \cup V$, $span(U) \cup span(V)$ are also $T_0$-balanced.
\item[(c)] If $U$ is $T_0$-balanced and $U \subseteq V$, then $span(U) \cap V$ is also $T_0$-balanced.
\end{itemize}
\end{lemma}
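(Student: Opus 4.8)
The plan is to prove all three parts through the dual characterization of $T_0$-balancedness supplied by Lemma~\ref{lemma:Kohlberg}, which trades the inconvenient strictly-positive-weight condition in the definition for a clean linear condition: a collection $T$ is $T_0$-balanced if and only if every $\by \in Y(T_0 \cup T)$ satisfies $\by(\S) = 0$ for all $\S \in T$. The only other fact I need is that $\by(\S) = \by^T \be(\S)$ depends linearly on $\be(\S)$, so that $\by$ vanishing on a collection forces it to vanish on the entire linear span of that collection. The span statements (a) and (c) will both be instances of this principle, while the union statement in (b) is handled by averaging two balancing certificates.

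For part (a), suppose $T$ is $T_0$-balanced and take any $\by \in Y(T_0 \cup span(T))$. Since $T \subseteq span(T)$ we have $Y(T_0 \cup span(T)) \subseteq Y(T_0 \cup T)$, so the characterization gives $\by(\S) = 0$ for all $\S \in T$. For an arbitrary $\S' \in span(T)$ we may write $\be(\S') = \sum_{\S \in T}\lambda_\S \be(\S)$, whence $\by(\S') = \sum_{\S \in T}\lambda_\S \by(\S) = 0$. Thus $\by$ vanishes on all of $span(T)$, and applying the characterization in the reverse direction shows $span(T)$ is $T_0$-balanced.

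For part (b), I first treat $U \cup V$ directly from the definition by averaging. If $\be(\N) = \sum_{\S \in T_0}\gamma^U_\S \be(\S) + \sum_{\S \in U}\omega^U_\S \be(\S)$ and $\be(\N) = \sum_{\S \in T_0}\gamma^V_\S \be(\S) + \sum_{\S \in V}\omega^V_\S \be(\S)$ with $\omega^U,\omega^V > 0$, then the midpoint of the two certificates expresses $\be(\N)$ with nonnegative weights on $T_0$ and a weight on $U \cup V$ that is strictly positive everywhere: a coalition in $U \cap V$ receives the sum of two positive halves, and a coalition in exactly one of $U,V$ receives one positive half. Hence $U \cup V$ is $T_0$-balanced. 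For $span(U) \cup span(V)$, I invoke part (a) to conclude $span(U)$ and $span(V)$ are each $T_0$-balanced, and then apply the union result just established with $U,V$ replaced by $span(U),span(V)$.

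Part (c) mirrors part (a). Since $U \subseteq V$ and $U \subseteq span(U)$, we have $U \subseteq span(U) \cap V \subseteq span(U)$. For any $\by \in Y(T_0 \cup (span(U) \cap V))$, the inclusion $U \subseteq span(U) \cap V$ gives $\by \in Y(T_0 \cup U)$, so the $T_0$-balancedness of $U$ forces $\by(\S) = 0$ for all $\S \in U$, and linearity then forces $\by(\S') = 0$ for every $\S' \in span(U) \supseteq span(U) \cap V$; the characterization yields that $span(U) \cap V$ is $T_0$-balanced. The one genuine obstacle throughout is the strict positivity demanded of the weights $\omega$: constructing such weights by hand for the enlarged span collections in (a) and (c) would require a perturbation argument that pushes small positive weight onto the newly added coalitions while keeping the balance, whereas routing these parts through Lemma~\ref{lemma:Kohlberg} sidesteps weight construction entirely, and the averaging step in (b) preserves strict positivity automatically.
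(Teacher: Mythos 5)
Your proof is correct, but it follows a genuinely different route from the paper's for parts (a) and (c). The paper works directly with the definition of $T_0$-balancedness: for each $\S_0 \in span(T)$ it writes $\be(\S_0)$ as a linear combination of $\{\be(\S)\}_{\S \in T}$, perturbs the existing balancing certificate by a small $\delta > 0$ chosen so that the weights $\omega_\S - \delta\beta_\S$ stay strictly positive, thereby showing $T \cup \{\S_0\}$ is $T_0$-balanced, and then concludes for all of $span(T)$ at once (an implicit averaging over the finitely many spanned coalitions, the same device it makes explicit in part (b)). You instead route (a) and (c) through the dual characterization of Lemma~\ref{lemma:Kohlberg}: membership $\by \in Y(T_0 \cup span(T))$ restricts to $\by \in Y(T_0 \cup T)$, balancedness of $T$ forces $\by(\S)=0$ on $T$, and linearity of $\by \mapsto \by^T\be(\S)$ propagates the vanishing to the whole span, after which the reverse direction of Lemma~\ref{lemma:Kohlberg} closes the argument; part (c) is the same argument using $U \subseteq span(U) \cap V$. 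Your part (b) coincides with the paper's averaging of two certificates. What each approach buys: your version sidesteps weight construction entirely --- no $\delta$-perturbation, and no need for the final ``combine all $\S_0$ simultaneously'' step that the paper leaves implicit --- so it is arguably tighter as written; the paper's version is constructive (it exhibits explicit weights) and is self-contained in the sense that it does not lean on Lemma~\ref{lemma:Kohlberg}, which the paper imports from Kohlberg (1971) without proof in the main text. There is no circularity issue in your proof, since Lemma~\ref{lemma:Kohlberg} is established independently of Lemma~\ref{lemma:balancedness}.
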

\begin{proof}
(a) Given that $T$ is $T_0$-balanced, there exists $\mb{\gamma} \in \R_{\geq0}^{|T_0|}$ and $\mb{\omega} \in \R_{>0}^{|T|}$ such that 
$$\be(\N) = \sum_{\S \in T_0} \gamma_S \be(\S) + \sum_{\S \in T} \omega_S \be(\S).$$

For any $\S_0 \in span(T)$, there exists $\mb{\beta}$ such that $\be({\S_0}) =\sum_{\S \in T} \beta_S \be(\S)$. Thus, for any $\delta$, we have
\begin{eqnarray*}
  \be(\N) &=& \sum_{\S \in T_0} \gamma_S \be(\S) + \sum_{\S \in T} \omega_S \be(\S)\\
  &=& \sum_{\S \in T_0} \gamma_S \be(\S) + \sum_{\S \in T} \omega_S \be(\S) + \delta (\be(\S_0) -\sum_{\S \in T} \beta_S \be(\S))\\
  &=& \delta \be(\S_0)  + \sum_{\S \in T_0} \gamma_S \be(\S) + \sum_{\S \in T} (\omega_S-\delta \beta_S) \be(\S).
\end{eqnarray*}
Since $\mb{\alpha} >0$, we can choose $\delta >0$ which is small enough such that $(\alpha_\S-\delta \beta_\S) > 0,~\forall \S \in T$. Thus, $T\cup\{\S_0\}$ is a $T_0$-balanced collection. Since this holds for all $\S_0 \in span(T)$, we can conclude that $span(T)$ is $T_0$-balanced.

(b) Given that collections $U,V$ are $T_0$-balanced, there exists $\mb{\gamma},\mb{\omega} \in \R_{\geq0}^{|T_0|}$ and $\mb{\alpha} \in \R_{>0}^{|U|}$, $\mb{\beta} \in \R_{>0}^{|V|}$ such that 
$$\be({\N}) =\sum_{\S \in T_0} \gamma_S \be(\S) +\sum_{\S \in U} \alpha_S \be(\S) = \sum_{\S \in T_0} \omega_S \be(\S) + \sum_{\S \in V} \beta_S \be(\S).$$ 
This leads to 
$$\be({\N}) = \sum_{\S \in T_0} (1/2\gamma_S + 1/2\omega_S) \be(\S) + \sum_{\S \in U} 1/2\alpha_S \be(\S) + \sum_{\S \in V} 1/2 \beta_S \be(\S).$$ 
Thus $U \cup V$ is also $T_0$-balanced. We can also prove that $span(U) \cup span(V)$ is $T_0$-balanced in a similar way as shown in the proof of part (a).

(c) The proof is similar to part (a) due to the fact that, for any $\S_0 \in span(U) \cap V$, we have $\S_0 \in span(U)$ and hence $U \cup \S_0$ is also $T_0$-balanced. Thus, $span(U) \cap V$ is $T_0$-balanced. $\qed$
\end{proof}

With this result, we can provide an improved Kohlberg criterion as shown in Algorithm~\ref{alg:Modified_Kohlberg1}.

%\vspace{-.2in}
\begin{algorithm}[!h]
\textbf{Input}: Game G(N,v), imputation solution $\bx$\;
\textbf{Output}: Conclude if $\bx$ is the nucleolus or not\;
\textbf{1}. Initialization: Set $H_0 = \{\be_N,\emptyset\}$, $T_0 = \{\{i\},i=1,\ldots,n~:~x_i = v(\{i\})\}$ and $k=1$\;
\While{$rank(H_{k-1}) < n$}{
        \textbf{2}. Find $T_k = \displaystyle \argmax_{\S \not \in span(H_{k-1})} \left\{v(\S) - \bx(\S)\right\}$\;%Compute $\epsilon_k = \displaystyle \max_{\S \not \in span(H_{k-1})} \left\{v(\S) - \bx(\S)\right\}$;
        \eIf{$(\displaystyle \cup_{j=1}^k T_j)$ is $T_0$-balanced}{
           \textbf{3}. Set $H_k = H_{k-1} \cup T_k$, $k=k+1$ and continue\;
         }
        { \textbf{4}. Stop the algorithm and conclude that $\bx$ \textbf{is not} the nucleolus.}
      }
\textbf{5}. Conclude that $\bx$ \textbf{is} the nucleolus.
\caption{Simplified Kohlberg Algorithm for verifying if a solution is the nucleolus of a cooperative game.}
\label{alg:Modified_Kohlberg1}
\end{algorithm}
%\vspace{-.2in}

The main differences between Algorithm~\ref{alg:Modified_Kohlberg1} and Algorithm~\ref{alg:Kohlberg} are: (a) the stopping condition of the while loop has been changed from $H_{k-1} \neq 2^\N$ to $rank(H_{k-1}) < n$, and (b) the search space at step 2 has been changed from $\S \not \in H_{k-1}$ to $\S \not \in span(H_{k-1})$. %These changes are based on the observation that, if at around $k$ we have already verified that $\cup_{j=1}^k T_j$ is $T_0$-balanced, then we do not have to consider other coalitions that belong to $\cup_{j=1}^k span(T_j)$ in the subsequent rounds.
As a result, we have the following desirable property:

\begin{theorem}\label{thm:algo1_correctness}
The while-loop in Algorithm~\ref{alg:Modified_Kohlberg1} terminates after at most $(n-1)$ iterations and it correctly decides whether a solution is the nucleolus.
\end{theorem}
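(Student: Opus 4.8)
The plan is to establish the two claims of Theorem~\ref{thm:algo1_correctness} separately, and to derive correctness by comparing Algorithm~\ref{alg:Modified_Kohlberg1} with the original criterion: by Theorem~\ref{theorem:Kohlberg}, $\bx$ is the nucleolus if and only if the original sequence $(T_0,E_1,E_2,\ldots)$ has Property~I (equivalently Property~II), where I write $\epsilon_1>\epsilon_2>\cdots>\epsilon_m$ for the distinct excess values, $E_i$ for the collection of coalitions attaining $\epsilon_i$, and $F_l=E_1\cup\cdots\cup E_l$. So the task reduces to showing that the at most $(n-1)$ balancedness tests performed by the simplified algorithm are collectively equivalent to ``$F_l$ is $T_0$-balanced for every $l$''.

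The iteration bound is the easy half. Since $H_0=\{\be(\N),\emptyset\}$ has $rank(H_0)=1$ and every coalition placed into $T_k$ at Step~2 is drawn from outside $span(H_{k-1})$, each executed iteration contributes at least one vector independent of $span(H_{k-1})$, so $rank(H_k)\ge rank(H_{k-1})+1$ and hence $rank(H_k)\ge k+1$. The guard $rank(H_{k-1})<n$ therefore fails by the time $k-1=n-1$, so the while-loop runs at most $(n-1)$ times (fewer if it exits early at Step~4).

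For correctness I would first record the structure of the simplified tight sets. The simplified maxima $\epsilon_k^{s}:=\max_{\S\notin span(H_{k-1})}\{v(\S)-\bx(\S)\}$ are \emph{strictly} decreasing, so they single out a subsequence $\epsilon_{i_1}>\epsilon_{i_2}>\cdots$ with $i_1=1$: after iteration $k$ every coalition of excess $\ge\epsilon_{i_k}$ lies in $span(H_k)$, because those of strictly larger excess were excluded at earlier iterations while those at level $i_k$ are either placed in $T_k$ or already spanned. This gives the two facts I need: $G_k:=\cup_{j=1}^k T_j\subseteq F_{i_k}$, and every coalition of $F_{i_k}\setminus G_k$ — the coalitions of the skipped levels and the level-$i_j$ coalitions omitted from $T_j$ — satisfies $\be(\S)\in span(\{\be(\N)\}\cup G_{j-1})$ for the iteration $j$ at which it was passed over.

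It remains to prove the equivalence ``every $G_k$ is $T_0$-balanced $\iff$ every $F_l$ is $T_0$-balanced'', which I would do through the $Y$-set characterisation of Lemma~\ref{lemma:Kohlberg}. The forward direction (justifying the conclusion ``nucleolus'' at Step~5) is the easier one and follows the pattern of Lemma~\ref{lemma:balancedness}(c): for any $l$, choosing the largest $k$ with $i_k\le l$ yields $G_k\subseteq F_l\subseteq span(\{\be(\N)\}\cup G_k)$, and if $G_k$ is $T_0$-balanced then any $\by\in Y(T_0\cup F_l)\subseteq Y(T_0\cup G_k)$ vanishes on $G_k$; the span membership together with $\by(\N)=0$ then forces $\by(\S)=\by^{T}\be(\S)=0$ on all of $F_l$, so $F_l$ is $T_0$-balanced. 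The reverse direction (justifying the early stop at Step~4) I would prove by induction on $k$: for $\by\in Y(T_0\cup G_k)$, the inductive vanishing on $G_{j-1}$ and $\by(\N)=0$ make $\by$ nonnegative on each newly added level, hence $\by\in Y(T_0\cup F_{i_k})$, whereupon Property~I of the original sequence forces $\by\equiv0$ on $F_{i_k}\supseteq G_k$; by Lemma~\ref{lemma:Kohlberg} this means $G_k$ is $T_0$-balanced. Taking the contrapositive, the first failed test certifies some $F_l$ is not $T_0$-balanced, so $\bx$ is not the nucleolus. I expect this reverse direction to be the main obstacle: it asserts that discarding the spanned coalitions loses no information — that a vector constrained to be nonnegative only on the retained coalitions $G_k$ is automatically forced to vanish on the omitted coalitions of $F_{i_k}$ — and it is precisely here that the span-membership bookkeeping must be combined carefully with the original Property~I.
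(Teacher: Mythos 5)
Your proof is correct, and it takes a genuinely different route from the paper's. You prove correctness by reduction to the original criterion: using only Lemma~\ref{lemma:Kohlberg} and linear algebra, you show that the at most $(n-1)$ tests of Algorithm~\ref{alg:Modified_Kohlberg1} are collectively equivalent to Property~I/II for the original sequence of tight sets, and then invoke Theorem~\ref{theorem:Kohlberg} as a black box. The two facts you isolate --- that the simplified maxima select a strictly decreasing subsequence of the original levels, and that every passed-over coalition of $F_{i_k}$ lies in $span(\{\be(\N)\}\cup G_{j-1})$ for the iteration $j$ at which it was skipped --- do all the work, with the sandwich $G_k\subseteq F_l\subseteq span(\{\be(\N)\}\cup G_k)$ giving one direction and a level-by-level induction through Property~I giving the other. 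The paper argues instead directly from the lexicographic definition of the nucleolus and never compares against the original levels: when a test fails it extracts $\by$ from Lemma~\ref{lemma:Kohlberg} and shows that $\bx+\delta\by$ lexicographically improves on $\bx$ for sufficiently small $\delta>0$ (a perturbation argument with lexicographic comparisons restricted to subcollections of coalitions), and when step~5 is reached it takes the nucleolus $\bz$ and shows by induction that $(\bz-\bx)(\S)=0$ for all $\S\in H_k$, forcing $\bx=\bz$ once the rank reaches $n$. Your route buys modularity: all lexicographic reasoning is quarantined inside Theorem~\ref{theorem:Kohlberg}, and you dispense entirely with the $\delta$-scaling and the restricted-lexicographic bookkeeping, which are the most delicate (and tersest) parts of the paper's argument; indeed your span-membership accounting makes explicit what the paper's step-5 induction leaves implicit. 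The paper's route, in exchange, is essentially self-contained (it uses Theorem~\ref{theorem:Kohlberg} only for the balancedness of $T_1$), and its step-5 argument yields the additional explicit fact that any candidate nucleolus must agree with $\bx$ on every coalition accumulated in $H_k$. The termination bound is proved identically in both.
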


%To prove this theorem, we first need the following result which was proven in~\citet{kohlberg1971nucleolus}.

\begin{proof}%[Proof of Theorem~\ref{thm:algo1_correctness}]
First of all, by construction in step 2 of the algorithm, $T_k \cup span(H_{k-1}) = \emptyset$ and hence, by step 3, we have the rank of $H_k = H_{k-1} \cup T_k$ keeps increasing. Therefore,
$$n \geq rank(H_k) = rank(H_{k-1} \cup T_k) \geq rank(H_{k-1})+1\geq \ldots \geq rank(H_0)+k =k+1,$$
and hence the algorithm, i.e., the while loop, terminates in at most $(n-1)$ iterations. Here, we note that the algorithm terminates at either step 4 or step 5 with complementary conclusions.
%\end{proof}

Proving that the algorithm correctly decides whether a solution is the nucleolus is equivalent to showing that (a) if $\bx$ is the nucleolus then the algorithm correctly terminates at step 5, and (b) if the algorithm terminates at step 5, then the input solution must be the nucleolus.  

Part (a): If $\bx$ is the nucleolus, then $T_1$ must be $T_0$-balanced as a direct result from the Kohlberg criterion (described in Theorem~\ref{theorem:Kohlberg}). Thus $T_1$ is $T_0$-balanced and the algorithm goes through to step 3 at $k=1$. Suppose, as a contradiction, that the algorithm goes through to step 4, instead of step 5, at some index $k>1$; that is $(\cup_{j=1}^k T_j)$ is not $T_0$-balanced. By Lemma~\ref{lemma:Kohlberg}, there exists $\by \in R^n$ such that
\small
\begin{eqnarray}
  &&\by(\S) \geq 0, \forall \S \in \cup_{j=0}^k T_j;~ \by(N) = 0;~ \by(\S') > 0, \text{ for some } \S' \in \cup_{j=1}^k T_j. \label{ieq:thm1:a0}
\end{eqnarray}
\normalsize
Notice, however, that $\cup_{j=1}^{k-1} T_j$ is $T_0$-balanced by the construction in step 3 of the previous iteration. Therefore $\S' \not \in H_{k-1}$ since, otherwise, the result in Lemma~\ref{lemma:Kohlberg} is violated. Thus $\S' \in T_k$ and hence~(\ref{ieq:thm1:a0}) leads to

\small
\begin{eqnarray*}
  &&(\bx+\by)(\S) \geq \bx(\S), \forall \S \in T_k;~ (\bx+\by)(\S') > \bx(\S'), \text{ for some } \S' \in T_k,\\
  &\Rightarrow& d(\S,\bx+\by) \leq d(\S,\bx), \forall \S \in T_k;~ d(\S',\bx+\by) < d(\S',\bx), \text{ for some } \S' \in T_k,
\end{eqnarray*}
\normalsize
that is, for all coalitions in $T_k$, the corresponding excess values for $(\bx+\by)$ is no greater than that of $\bx$ with at least one strict inequality for some coalition. Thus,
\small
\begin{eqnarray}
  && \Phi(\bx+\by) <_{L,T_{k}} \Phi(\bx),\label{ieq:thm1:b}  
\end{eqnarray}
\normalsize

where, for each collection of coalition $Q$, the subscript $(\cdot_{L,Q})$ is the lexicographical comparison with respect to \emph{only} coalitions in $Q$. Since $H_{k-1}$ is $T_0$-balanced by the construction in step 3 of the previous iteration, $span(H_{k-1})$ is also $T_0$-balanced by Lemma~\ref{lemma:balancedness}. Thus, $\by(\S) = 0,~ \forall \S \in span(H_{k-1})$ and
\begin{eqnarray}
\Phi(\bx+\by) =_{L,span(H_{k-1})} \Phi(\bx).\label{ieq:thm1:a}
\end{eqnarray}

From~\refs{ieq:thm1:b} and~\refs{ieq:thm1:a} we have
\begin{eqnarray}
\Phi(\bx+\by) <_{L,span(H_{k-1})\cup T_k} \Phi(\bx).\label{ieq:thm1:c}
\end{eqnarray}

For all $\S \not \in (span(H_{k-1})\cup T_k)$ we have $v(\S) -\bx(\S) < \epsilon_k$. Thus, there exists $\delta >0$ and small enough such that
$$v(\S) -(\bx+\delta \by)(\S) < \epsilon_k,~\forall \S \not \in (span(H_{k-1})\cup T_k).$$

Note that results in~\refs{ieq:thm1:a0} also holds if we scale $\by$ by any positive factor. Thus,
\begin{eqnarray*}
\Phi(\bx+\delta \by) <_{L,span(H_{k-1})\cup T_k} \Phi(\bx).
\end{eqnarray*}
In other words, the $|span(H_{k-1})\cup T_k|$ largest excess value of $\bx$ is lexicographically larger than the excess values of $(\bx+\delta \by)$ on these collections of coalitions and the remaining coalitions which means $\bx$ is not the nucleolus. Contradiction!

Part (b): If the algorithm bypassed step 4 and went to step 5, then, $(\cup_{j=1}^k T_j)$ is $T_0$-balanced for all $k$ until $rank(H_{k-1}) = n$. Let $\bz$ be the nucleolus; then by its definition, its worst excess value should be no larger than the worst excess value of $\bx$, which is equal to $\epsilon_1$. Thus, the excess value of $\bz$ over any coalition, including those in $T_1$, must be at most $\epsilon_1$; i.e.,
\begin{eqnarray*}
  &&(\bz-\bx)(\S) \geq 0, \forall \S \in T_{1}.
\end{eqnarray*}
Since $T_1$ is $T_0$-balanced, we have, by Lemma~\ref{lemma:Kohlberg}, $(\bz-\bx)(\S) = 0$ for all $\S \in T_1$ (by noticing also that $(\bz-\bx)(\N) = 0$ and $(\bz-\bx)(\S) \geq 0,\forall \S \in T_0$ from the fact that $\bz$ is an imputation and the construction of $T_0$). Using a similar argument, given that $\bx$ and $\bz$ are lexicographically equivalent on $span(T_1)$ and since $\bz$ is the nucleolus, we also have $(\bz-\bx)(\S) \geq 0, \forall \S \in T_2$. Thus,
\begin{eqnarray*}
  &&(\bz-\bx)(\S) \geq 0, \forall \S \in T_1 \cup T_2.
\end{eqnarray*}
Again, given that $(T_1 \cup T_{2})$ is $T_0$-balanced, we have, by Lemma~\ref{lemma:Kohlberg}, $(\bz-\bx)(\S) = 0$ for all $\S \in T_1 \cup T_2$. We can continue with this and use an induction argument to show that $(\bz-\bx)(\S) = 0$ for all $\S \in H_{k-1}, k\geq 1$. Given that $rank(H_{k-1}) = n$, we must have $\bx=\bz$ or $\bx$ is the nucleolus. $\qed$
\end{proof}
\begin{comment}
To demonstrate the effectiveness of Algorithm~\ref{alg:Modified_Kohlberg1} compared to Algorithm~\ref{alg:Kohlberg}, let us reconsider Example~\ref{ex2}. For solution $\bx^a = (1,\ldots,1,1/3,1/3,1/3)$, we have $T_0 = \emptyset$, $\epsilon_1 = k+2$ and the tight set $T_1(\bx^a) = \big\{ \{1\},\ldots,\{n-3\},\{n-2,n-1,n\}\big\}$ which is $T_0$-balanced. We set $H_1 = \big\{ N, \{1\},\ldots,\{n-3\},\{n-2,n-1,n\}\big\}$. In the second iteration, we have $\epsilon_2 =k+4/3$ and $T_2 = \big\{ \{n-2,n-1\},\{n-2,n\},\{n-1,n\}\big\}$. We can verify that $(T_1 \cup T_2)$ is $T_0$-balanced. In step 3 of Algorithm~\ref{alg:Modified_Kohlberg1}, we set $H_2 = \big\{N, \{1\},\ldots,\{n-3\},\{n-2,n-1,n\},\{n-2,n-1\},\{n-2,n\},\{n-1,n\}\big\}$. At this point, $rank(H_2)=n$ and hence we stop the algorithm without having to check other deficit levels; i.e., the algorithm terminates after two iterations instead of around $2^{n-3}$ iterations in the original Kohlberg algorithm.
\end{comment}
%Let us now consider another non-nucleolus solution $\bx^{b} = (1,\ldots,1,1/2,1/2,0)$. We have $\epsilon_1 = k+2$ and the tight set $T_1(\bx^b) = \big\{ \{1\},\ldots,\{n-3\},\{n-2,n-1,n\}\big\}$ which is $T_0$-balanced. $T_1$ also has full row rank and hence we set $R_1 = T_1$ and $H_1 = \big\{ N, \{1\},\ldots,\{n-3\},\{n-2,n-1,n\}\big\}$. In the second iteration, we have $\epsilon_2 =k+3/2$ and $T_2 = \big\{ \{n-2,n\},\{n-1,n\}\big\}$. We can verify that $[R_1,T_2]$ is not $T_0$-balanced and we can conclude that $\bx^{b}$ is not the nucleolus.
\textbf{Remark}: It is noted that step 2 in both Algorithms~\ref{alg:Kohlberg} and~\ref{alg:Modified_Kohlberg1} involves comparing vectors of exponentially large sizes. Indeed we cannot escape from having an exponentially large number of operations because we are (lexicographically) comparing exponentially large vectors. The key finding in Theorem~\ref{thm:algo1_correctness}, however, is to show that step 2 of Algorithm~\ref{alg:Modified_Kohlberg1} is not repeated more than $(n-1)$ times (instead of possibly exponential times in the original Kohlberg criterion described in Algorithm~\ref{alg:Kohlberg}). Although we cannot escape from having exponential number of operations for games without any structure, there are structured games such as the voting game, the network flow game and the coalitional skill games in which step 2 can be done efficiently. We refer the readers to~\citet{Nguyen2016} for more details on this.

\subsection{Fast algorithm for checking the balancedness}\label{subsec:balancedness_checking}
According to the Kohlberg criterion, to check the $T_0$-balancedness of $T$, we need to show the existence (or non-existence) of $\mb{\gamma} \in \R_{\geq0}^{|T_0|}$ and $\mb{\omega} \in \R_{>0}^{|T|}$ such that 
$$\be(\N) = \sum_{\S \in T_0} \gamma_S \be(\S) + \sum_{\S \in T} \omega_S \be(\S).$$
This is not a big issue for small-sized $T$ where the inspection of such $(\gamma,\omega)$ can be done easily. \citet{solymosi2015} [Lemma 3] provide an approach by solving $|T|$ linear programs as follows. For each $\C \in T$, let
$$q_{\C}^* =\left\{\max w_{\C} ~:~ \sum_{\S \in T_0 } \gamma_\S \be(\S)+\sum_{\S \in T} w_\S \be(\S) = \be(\N),~ (\mb{\gamma},\mb{\omega}) \in \R_{\geq 0}^{|T_0|+|T|} \right\}.$$
Then $T$ is $T_0$-balanced if and only if $q_{\C}^*>0,\forall {\C} \in T$. Notice, however, that the collection $T$ appearing in the Kohlberg criterion could be exponentially large, and hence solving all the $|T|$ linear programs is not practical. We present a faster approach that involves at most $rank(T)$ linear programs (this is an upper-bound and, in practice, we often need to solve a much smaller number of LPs). Algorithm~\ref{alg:balancedness} describes this in details.

%\vspace{-.2in}
\begin{algorithm}[!h]
\textbf{Input}: A collection of coalitions $T$\;
\textbf{Output}: To conclude if $T$ is $T_0$-balanced or not\;

\textbf{1}. Initialization: Set $U = \emptyset$\;
\While{$rank(U) < rank(T)$}{
        \textbf{2}. Solve
        $\quad \displaystyle (\mb{\gamma}^*,\mb{\omega}^*) = \argmax_{(\gamma,\omega) \in \R_{\geq 0}^{|T_0|+|T|}}~ \sum_{\S \in T \backslash U} \omega_\S~ \text{s.t.} ~ \sum_{\S \in T_0 } \gamma_\S \be(\S)+\sum_{\S \in T } \omega_\S \be(\S) = \be(\N)
        $\;
        \eIf{$\norm{\mb{\omega}^*} = 0$}{
           \textbf{3}. Stop the algorithm and conclude that $T$ is not $T_0$-balanced\;
         }
        { \textbf{4}. Set $U = span(U \cup \{\S:\omega_\S^*>0\}) \cap T$\; }
       }
      \textbf{5}. Stop the algorithm and conclude that $T$ is $T_0$-balanced.
\caption{Algorithm for checking the $T_0$-balancedness}
\label{alg:balancedness}
\end{algorithm}
%\vspace{-.2in}

\begin{theorem}
Algorithm~\ref{alg:balancedness} correctly decides if $T$ is $T_0$-balanced and it terminates in at most $rank(T)$ iterations.
\end{theorem}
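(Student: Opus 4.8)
The plan is to maintain a loop invariant on the working collection $U$ and then read off both correctness and the iteration bound from it. Concretely, I would prove by induction that every $U$ produced at step 4 is (i) $T_0$-balanced and (ii) \emph{span-closed within} $T$, meaning $span(U)\cap T = U$. For the base case, after the first pass through step 2 we either stop (optimum zero) or obtain a feasible $(\mb{\gamma}^*,\mb{\omega}^*)\geq \mb{0}$; restricting to the coalitions $P=\{\S:\omega_\S^*>0\}$ exhibits $P$ as a $T_0$-balanced collection (the strictly positive weights give the required balancing), and Lemma~\ref{lemma:balancedness}(a),(c) then give that $U=span(P)\cap T$ is $T_0$-balanced. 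For the inductive step, the current LP solution again makes $P$ a $T_0$-balanced collection, so Lemma~\ref{lemma:balancedness}(b) makes $U_{\mathrm{old}}\cup P$ balanced, (a) makes its span balanced, and (c) applied with $V=T\supseteq U_{\mathrm{old}}\cup P$ makes $U_{\mathrm{new}}=span(U_{\mathrm{old}}\cup P)\cap T$ balanced. Span-closedness follows directly from $span(span(\cdot)\cap T)\subseteq span(\cdot)$.

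Given the invariant, the two correctness directions are short. When the loop exits at step 5 we have $rank(U)=rank(T)$ with $U\subseteq T$, so $span(U)=span(T)$ and hence $span(U)\cap T = T$; since $U$ is $T_0$-balanced, Lemma~\ref{lemma:balancedness}(c) yields that $T$ itself is $T_0$-balanced. For the failure branch (step 3), I would argue the contrapositive: if $T$ were $T_0$-balanced there would be a feasible weighting with \emph{every} $\omega_\S>0$, in particular on the nonempty set $T\setminus U$ (nonempty because $rank(U)<rank(T)$ forces some $\S\in T$ outside $span(U)$, hence outside $U$), making the objective $\sum_{\S\in T\setminus U}\omega_\S$ strictly positive; so a zero optimum certifies that $T$ is not $T_0$-balanced.

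For termination I would show each iteration reaching step 4 strictly increases $rank(U)$. Since the algorithm did not stop, the optimum is positive, so some $\S'\in T\setminus U_{\mathrm{old}}$ has $\omega_{\S'}^*>0$; span-closedness of $U_{\mathrm{old}}$ forces $\S'\notin span(U_{\mathrm{old}})$, and because $\S'\in U_{\mathrm{new}}$ we obtain $rank(U_{\mathrm{new}})\geq rank(U_{\mathrm{old}})+1$. As $rank(U)\leq rank(T)$ throughout and starts at $0$, the loop runs at most $rank(T)$ times.

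The hard part will be pinning down the stopping test precisely: the objective only sums weights over $T\setminus U$, so $\norm{\mb{\omega}^*}=0$ must be read as the optimal objective value being $0$ (equivalently $\omega_\S^*=0$ for all $\S\in T\setminus U$), and I must ensure that this—rather than the vanishing of the full weight vector, which may legitimately carry positive mass on $U$—is what drives the non-balancedness conclusion. The second delicate point is the span-closedness invariant: it is exactly what converts ``positive weight on some $\S'\in T\setminus U$'' into a genuine rank increase, so I would carry it through the induction rather than treat it as obvious.
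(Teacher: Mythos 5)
Your proof is correct and follows essentially the same route as the paper: maintain that $U$ stays $T_0$-balanced via Lemma~\ref{lemma:balancedness}, get termination from a strict rank increase, read a zero optimum as a certificate of non-balancedness by contraposition, and conclude at step~5 from $rank(U)=rank(T)$, $U\subseteq T$ and Lemma~\ref{lemma:balancedness}(c) that $T=span(U)\cap T$ is $T_0$-balanced. You are in fact more careful than the paper, which merely asserts that ``$U$ is kept added with coalitions outside its span'': your span-closedness invariant $span(U)\cap T=U$ and your reading of $\norm{\mb{\omega}^*}=0$ as ``optimal objective zero'' are exactly the details needed to make that assertion rigorous.
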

\begin{proof}
First of all, the while loop should terminate given that $rank(U)$ keeps increasing via the construction of $U$ in steps 2 and 4; i.e., the set $U$ is kept added with coalitions outside its span. Thus, the algorithm terminates at either step 3 or 5 and we need to prove that the corresponding conclusions are correct. If the algorithm terminates at step 3, then $\norm{\mb{\omega}^*} = 0$ (as otherwise the optimal solution in step 2 should be strictly positive) and hence $T$ is not $T_0$-balanced. If the algorithm terminates at step 5 then, prior to that, we have $rank(U) = rank(T)$ in order for the while loop to terminate. The construction of $U$ in step 4 ensures that $U$ is a $T_0$-balanced set by Lemmas~\ref{lemma:balancedness}b and \ref{lemma:balancedness}c. Thus $T= span(U) \cap T$ is also $T_0$-balanced by Lemma \ref{lemma:balancedness}c. In conclusion, the algorithm always terminates with the correct conclusion. $\qed$
\end{proof}

\subsection{Reducing the sizes of the tight sets}\label{subsec:simplifiedKohlberg2}
On checking the Kohlberg criterion, we might end up having to store an exponentially large number of coalitions. We provide a method for reducing this to the size of at most $n(n-1)$. We start with the following theoretical results.

%Nguyen and Thomas paper:  nested LPs formulation for computing the nucleoli of cooperative games. The authors develop to find solution with minimal tight sets to potentially reduce the number of LP involved in \citet{derks1996implementing}. The authors also develops the idea of using representative sets and constraint generation to handle large and structured games. \citet{derks1996implementing} shows that ...$(n-1)$ linear program. Linear programming duality has been use to find `a partial' tight set.

\begin{theorem}%\label{lemma:balance_reduced}
\label{lemma:balancedness2}
The following results hold
\begin{itemize}
\item[(a)] If $T \subseteq 2^N $ is a $T_0$-balanced set then there exists $R \subseteq T$ with $1 \leq |R| = rank(R) \leq rank(T)$ that is $T_0$-balanced.
\item[(b)] For nonempty $P,Q \subseteq 2^N$ with $Q \cup P$ is a $T_0$-balanced set, there exists a subset $R \subseteq Q$ with $1 \leq |R| = rank(R) \leq rank(Q)$ such that $R \cup P$ is $T_0$-balanced.
\end{itemize}
\end{theorem}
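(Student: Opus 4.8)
The plan is to prove both parts by a Carath\'eodory-type reduction on the balancing weights: starting from a $T_0$-balanced representation with strictly positive weights, I repeatedly exploit a linear dependence among the active indicator vectors to drive one weight to zero, all the while preserving nonnegativity of the coefficients and the balancing identity $\be(\N) = \sum_{\S\in T_0}\gamma_\S\be(\S)+\cdots$. For part (a), the hypothesis that $T$ is $T_0$-balanced gives $\mb{\gamma}\in\R_{\geq 0}^{|T_0|}$ and $\mb{\omega}\in\R_{>0}^{|T|}$ with $\be(\N)=\sum_{\S\in T_0}\gamma_\S\be(\S)+\sum_{\S\in T}\omega_\S\be(\S)$. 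I take the current support $R:=\{\S\in T:\omega_\S>0\}$, initially all of $T$, as the candidate and shrink it until it becomes linearly independent.

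The reduction step: while $R$ is linearly dependent, i.e. $|R|>rank(R)$, there is a nontrivial relation $\sum_{\S\in R}\lambda_\S\be(\S)=\bo$, and after negating $\mb{\lambda}$ if necessary I may assume some $\lambda_\S>0$. Putting $t^*=\min_{\S:\,\lambda_\S>0}\omega_\S/\lambda_\S>0$ and replacing each $\omega_\S$ by $\omega_\S-t^*\lambda_\S$ keeps all weights nonnegative, leaves $\be(\N)$ unchanged (since only the multiple $t^*\bo$ is subtracted), and forces at least one weight to become $0$. Discarding the coalitions whose weight hit $0$ strictly decreases $|R|$ while retaining a valid $T_0$-balanced representation with $\mb{\gamma}$ untouched. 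As $|R|$ strictly decreases at each step, the process terminates with a linearly independent $R$, which is $T_0$-balanced by construction and satisfies $|R|=rank(R)\leq rank(T)$ because $R\subseteq T$.

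The one genuine subtlety, and the step I expect to be the crux, is showing that $R$ never collapses to the empty set, i.e. that $|R|\geq 1$. This is exactly where the $0/1$ structure of the coalition vectors is needed: $R$ could become empty only if every weight vanished at once at some step, which would force $\sum_{\S\in R}\omega_\S\be(\S)=\bo$ with $\mb{\omega}>\bo$ over a nonempty $R$. But each $\be(\S)$ with $\S\neq\emptyset$ is a nonzero nonnegative vector, so any strictly positive combination of such vectors is itself nonzero; hence total cancellation is impossible and the retained support always stays nonempty.

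For part (b) I run the identical reduction but confine every perturbation to the $Q$-weights. Writing $\be(\N)=\sum_{\S\in T_0}\gamma_\S\be(\S)+\sum_{\S\in Q}\alpha_\S\be(\S)+\sum_{\S\in P}\beta_\S\be(\S)$ with $\mb{\gamma}\in\R_{\geq 0}^{|T_0|}$, $\mb{\alpha}\in\R_{>0}^{|Q|}$, $\mb{\beta}\in\R_{>0}^{|P|}$, I only ever invoke a linear dependence supported inside the current $Q$-support and adjust the corresponding $\alpha_\S$, leaving $\mb{\beta}$ and $\mb{\gamma}$ fixed. Thus $P$ remains fully present with strictly positive weights at every stage, so $R\cup P$ stays $T_0$-balanced throughout; the termination, independence, and nonemptiness arguments carry over verbatim (nonemptiness again because a strictly positive combination over a nonempty subset of $Q$ cannot vanish), yielding $R\subseteq Q$ with $1\leq |R|=rank(R)\leq rank(Q)$ and $R\cup P$ $T_0$-balanced. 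The main point to verify carefully here is simply that the dependence used at each stage genuinely lives in $Q$, which holds because we only ever reduce within the $Q$-support, so the $P$-weights are never disturbed.
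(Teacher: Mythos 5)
Your proof is correct and follows essentially the same route as the paper: both arguments sparsify the support of the balancing weights to a linearly independent sub-collection of $T$ (resp.\ of $Q$) while leaving the $T_0$-weights $\mb{\gamma}$ (and, in part (b), the $P$-weights) untouched. The only difference is presentational: the paper invokes Carath\'eodory's theorem as a black box, whereas you carry out the dependence-elimination step explicitly (i.e., you re-prove the conical Carath\'eodory theorem), which has the minor benefit of making explicit both the linear-independence form actually needed and the $|R|\geq 1$ claim, namely that a strictly positive combination of nonzero $0/1$ vectors cannot vanish.
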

\begin{proof}$~~~$

\begin{itemize}
\item[(a)] Given that $T$ is $T_0$-balanced, there exists $\mb{\gamma} \in \R_{\geq0}^{|T_0|}$ and $\mb{\omega} \in \R_{>0}^{|T|}$ such that 
$$\be(\N) = \sum_{\S \in T_0} \gamma_S \be(\S) + \sum_{\S \in T} \omega_S \be(\S).$$
Thus,
\begin{eqnarray*}
  &&0 \neq \frac{1}{\sum_{\S \in T} \omega_S} \left(\be(\N) - \sum_{\S \in T_0} \gamma_S \be(\S)\right) = \sum_{\S \in T} \frac{\omega_S}{\sum_{\S \in T} \omega_S} \be(\S),
\end{eqnarray*}
i.e., $\frac{1}{\sum_{\S \in T} \omega_S} \left(\be(\N) - \sum_{\S \in T_0} \gamma_S \be(\S)\right) $ belongs to the convex combination of $\{\be(\S)\}_{\S \in T}$.  Applying the Caratheodory theorem, there exists a subset $U \subseteq T$ with $rank(U) = |U|= dim(T)$ such that $\frac{1}{\sum_{\S \in T} \omega_S} \left(\be(\N) - \sum_{\S \in T_0} \gamma_S \be(\S)\right) = \sum_{\S \in U} \beta_\S \be(\S)$.

By removing those coefficients $\beta_\S =0$, we obtain a subset $R \subseteq U \subseteq T$ with $rank(R) \leq rank(U)$ that is $T_0$-balanced. Note also that, since 
$$\frac{1}{\sum_{\S \in T} \omega_S} \left(\be(\N) - \sum_{\S \in T_0} \gamma_S \be(\S)\right) \neq 0,$$
there exists at least a coalition $\S$ with $\beta_\S >0$. Thus $1 \leq rank(R) \leq rank(T)$ and $R$ is $T_0$-balanced. In addition, since $rank(U) = |U|$, we also have $rank(R) = |R|$. %$\qed$

\item[(b)] Since $Q \cup P$ is $T_0$-balanced, there exists $\mb{\gamma} \in \R_{\geq0}^{|T_0|}$, $\mb{\alpha} \in \R_{>0}^{|P|}$ and $\mb{\beta} \in \R_{>0}^{|Q|}$ such that $\be({\N})= \sum_{\S \in T_0} \gamma_\S \be(\S) + \sum_{\S \in P} \alpha_\S \be(\S) + \sum_{\S \in Q} \beta_\S \be(\S)$. Thus,
\begin{eqnarray*}
  &&(\be(\N)-\sum_{\S \in T_0} \gamma_\S \be(\S)-\sum_{\S \in P} \alpha_\S \be(\S))= \sum_{\S \in Q} \beta_\S \be(\S) \neq 0.%\\
%  &&\frac{1}{\sum_{\S \in Q} \beta_\S}(\be(\N)-\sum_{\S \in P} \alpha_\S \be(\S))= \sum_{\S \in Q} \frac{\beta_\S}{\sum_{\S \in Q} \beta_\S} \be(\S);
\end{eqnarray*}
Using the same argument as in part (a), there exists a subset $Q' \subseteq Q$ with $rank(Q') = |Q'|= dim(Q)$ such that
\begin{eqnarray*}
  &&(\be(\N)-\sum_{\S \in T_0} \gamma_\S \be(\S)-\sum_{\S \in P} \alpha_\S \be(\S))= \sum_{\S \in Q'} \beta_\S \be(\S).
\end{eqnarray*}
By removing those coalitions $\S \in Q'$ with $\beta_\S=0$, we obtain a non-empty subset $R \subseteq Q'$ such that $R \cup P$ is $T_0$-balanced and $1 \leq |R| = rank(R) \leq rank(Q)$.
$\qed$
\end{itemize}
\end{proof}

%Suppose there exists $R \subseteq T$ that is $T_0$-balanced. Let $\be_N = \alpha_1 \be(\S_1) + \alpha_2 \be(\S_2) + \ldots + \alpha_m \be(\S_m) + \beta_1\be(\S_{m+1}+\ldots+ \beta_k \be(\S_{m+k}$.

%For any $\S \in T$, let $\S = \sum_{j=1}^k \beta_j \S_j$. Then
%\begin{eqnarray*}
%  \be_N &=& \sum_{i=1}^m\alpha_i \be(\S_i) \\
%  &=& \delta (\sum_{j=1}^k \beta_j \S_j) + \sum_{i=1}^m (\alpha_i - \delta \beta_i) \be(\S_i) \\
%  &=& \delta \S+ \sum_{i=1}^m (\alpha_i - \delta \beta_i) \be(\S_i)
%\end{eqnarray*}
%
%Since $\alpha_i > 0,\forall i$, we can choose $\delta >0$ and small enough such that $(\alpha_i - \delta \beta_i) > 0,\forall i$.

%It is noted that there might be multiple subsets $R \subseteq T$ that are $T_0$-balanced.
We denote such a subset $R$ in Theorem~\ref{lemma:balancedness2}a as $R = rep(T;T_0)$ and subset $R$ in Theorem~\ref{lemma:balancedness2}b as $R = rep(Q;P,T_0)$. Algorithm~\ref{alg:Modified_Kohlberg2} shows the improved Kohlberg Algorithm for verifying if a solution is the nucleolus by replacing each tight set of coalitions by its representation derived in Theorem~\ref{lemma:balancedness2}.

%\vspace{-.2in}
\begin{algorithm}[!h]
\textbf{Input}: Game G(N,v), imputation solution $\bx$\;
\textbf{Output}: Conclude if $\bx$ is the nucleolus or not\;
\textbf{1}. Initialization: Set $H_0 = T_0 = \be_N$, $T_0 = \{\{i\},i=1,\ldots,n~:~x_i = v(\{i\})\}$, and $k=1$\;
\While{$rank(H_{k-1}) < n$}{
        \textbf{2}. Find $T_k = \displaystyle \argmax_{\S \not \in span(H_k)} v(\S) - \bx(\S)$\;%Compute $\epsilon_k = \displaystyle \max_{\S \not \in span(H_k)} v(\S) - \bx(\S)$;
        \eIf{$(\displaystyle H_{k-1} \cup T_k)$ is $T_0$-balanced}{
           \textbf{3}. Set $R_k = rep(T_k ; H_{k-1},T_0)$, $H_k = H_{k-1} \cup R_k$, $k=k+1$ and continue\;
         }
        { \textbf{4}. Stop the algorithm and conclude that $\bx$ is not the nucleolus.}
      }
\textbf{5}. Conclude that $\bx$ is the nucleolus.
\caption{Improved Kohlberg Algorithm for verifying if a solution is the nucleolus of a cooperative game.}
\label{alg:Modified_Kohlberg2}
\end{algorithm}
%\vspace{-.2in}

The main difference between Algorithm~\ref{alg:Modified_Kohlberg2} and Algorithm~\ref{alg:Modified_Kohlberg1} is in step 3 where we set $H_k = H_{k-1} \cup rep(T_k; H_{k-1},T_0)$ instead of $H_k = H_{k-1} \cup T_k$. This means we store only a representative of $T_k$ in the subsequent rounds. The correctness of the algorithm can still be proven as presented in the following theorem.

%For a given $\bx$, first find tight set $T_1$. If $T_1$ is not $T_0$-balanced, stop the algorithm and claim that $\bx$ is not the nucleolus of the game. Otherwise, we find a subset $R_1 \subseteq T_1$ that is $T_0$-balanced and $1 \leq rank(R_1) \leq rank(T_1)$. This is possible through Lemma~\ref{lemma:balancedness2}. We then find $T_2 = \displaystyle \argmin_{\S \not \in span(R_1 \cup H_1)} \left\{v(\S) -v(\S)\right\}$. If $R1\cup T_2$ is not $T_0$-balanced, stop the algorithm and claim that $\bx$ is not the nucleolus of the game. Otherwise, we find a subset $R_2 \subseteq T_2$ such that $R_1 \cup R_2$ is $T_0$-balanced and $1 \leq rank(R_2) \leq rank(T_2)$.

\begin{theorem}
The while-loop in Algorithm~\ref{alg:Modified_Kohlberg2} terminates after at most $(n-1)$ iterations and it correctly decides whether a solution is the nucleolus.
\end{theorem}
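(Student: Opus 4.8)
The plan is to follow the proof of Theorem~\ref{thm:algo1_correctness} almost verbatim, the only structural change being that $H_k$ now grows by the representative $R_k = rep(T_k; H_{k-1},T_0)$ rather than by the full tight set $T_k$. The single fact that makes this substitution harmless is the defining property of $rep$ supplied by Theorem~\ref{lemma:balancedness2}(b): whenever the test in step~2 succeeds (so $H_{k-1}\cup T_k$ is $T_0$-balanced), the returned set satisfies $R_k \subseteq T_k$, $R_k \neq \emptyset$, and $R_k \cup H_{k-1}$ is again $T_0$-balanced. Consequently the loop invariant ``$H_{k-1}$ is $T_0$-balanced'' is preserved exactly as in Algorithm~\ref{alg:Modified_Kohlberg1}, and hence, by Lemma~\ref{lemma:balancedness}(a), $span(H_{k-1})$ is $T_0$-balanced as well. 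I would record these two facts first, since every later step rests on them.

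For termination I would argue as in Theorem~\ref{thm:algo1_correctness}. By the argmax in step~2 every coalition of $T_k$ lies outside $span(H_{k-1})$, so the same holds for the nonempty subset $R_k \subseteq T_k$; therefore $R_k$ contains at least one vector outside $span(H_{k-1})$ and $rank(H_k) = rank(H_{k-1}\cup R_k) \ge rank(H_{k-1}) + 1$. Since $rank(H_0) = 1$ and the rank is bounded by $n$, the while-loop executes at most $(n-1)$ times.

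Correctness splits into the two implications of Theorem~\ref{thm:algo1_correctness}. For part~(a) --- if $\bx$ is the nucleolus then the algorithm stops at step~5 --- I would induct on $k$, assuming the earlier tests passed so that $H_{k-1}$ is $T_0$-balanced. Suppose, for contradiction, the test at iteration $k$ fails; then $H_{k-1}\cup T_k$ is not $T_0$-balanced and Lemma~\ref{lemma:Kohlberg} produces $\by \in Y(T_0\cup H_{k-1}\cup T_k)$ with $\by(\S')>0$ for some $\S'$. Because $H_{k-1}$ is already $T_0$-balanced, Lemma~\ref{lemma:Kohlberg} forces $\by$ to vanish on $H_{k-1}$, so $\S' \in T_k$; then $\bx + \delta\by$ with small $\delta>0$ is an imputation whose sorted excess vector is lexicographically smaller than that of $\bx$ (excesses on $span(H_{k-1})$ are unchanged, those on $T_k$ do not increase and strictly decrease at $\S'$, and no coalition outside $span(H_{k-1})\cup T_k$ reaches $\epsilon_k$ for $\delta$ small), contradicting the nucleolus property. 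For part~(b) --- if the algorithm stops at step~5 then $\bx$ is the nucleolus --- I would let $\bz$ be the nucleolus and show by induction that $\bz-\bx$ vanishes on $span(H_{k-1})$. Lex-minimality of $\bz$ gives $(\bz-\bx)(\S)\ge 0$ on $T_k$ (the coalitions of largest $\bx$-excess outside $span(H_{k-1})$), and together with $(\bz-\bx)(\N)=0$, $(\bz-\bx)(\S)\ge 0$ on $T_0$, and the inductive vanishing on $H_{k-1}$, this places $\bz-\bx$ in $Y(T_0\cup H_{k-1}\cup T_k)$; since the test passed, Lemma~\ref{lemma:Kohlberg} forces $\bz-\bx$ to vanish on $H_{k-1}\cup T_k$, in particular on $R_k$, so it vanishes on $span(H_k)$. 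When the loop ends $rank(H_{k-1})=n$, whence $span(H_{k-1})=\R^n$ and $\bz-\bx=\bo$, i.e.\ $\bx=\bz$.

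The main obstacle, and the point that demands genuine care, is that $rep$ need \emph{not} preserve the span: a representative can span strictly less than $T_k$ modulo $H_{k-1}$, so the sequence of tight sets produced by Algorithm~\ref{alg:Modified_Kohlberg2} may differ from that of Algorithm~\ref{alg:Modified_Kohlberg1}, and the loop may take more (though still at most $(n-1)$) iterations. It is therefore tempting --- but wrong --- to try to prove that the two algorithms generate identical $T_k$'s. The resolution is that neither implication above needs this: the arguments use only that $H_{k-1}$ stays $T_0$-balanced and that the balancedness test is applied to the \emph{full} set $T_k$, so that Lemma~\ref{lemma:Kohlberg} applies to all of $T_k$ rather than merely to $R_k$. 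Any coalition that Algorithm~\ref{alg:Modified_Kohlberg2} may re-examine because the stored span shrank lies in the span of earlier tight sets, where $\by$ (respectively $\bz-\bx$) already vanishes, and hence cannot spoil either direction of the equivalence.
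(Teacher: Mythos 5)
Your proof is correct and follows essentially the same route as the paper's: termination via the rank of $H_k$ increasing by at least one per iteration (since the nonempty $R_k \subseteq T_k$ lies outside $span(H_{k-1})$), part (a) by the identical Lemma~\ref{lemma:Kohlberg} contradiction argument exploiting that $H_{k-1}$ remains $T_0$-balanced, and part (b) by induction showing $\bz-\bx$ vanishes on $span(H_{k-1})$ until the rank reaches $n$. Your only deviations are harmless refinements rather than a different method: in part (b) you apply Lemma~\ref{lemma:Kohlberg} directly to the tested collection $H_{k-1}\cup T_k$ (the paper phrases the induction via the $T_0$-balancedness of $\cup_{j=1}^{k}R_j$, having noted the same fact), and you make explicit the subtlety, which the paper leaves implicit, that $rep$ may shrink the stored span so that leftover tight coalitions can reappear in later iterations without affecting either direction of the argument.
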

\begin{proof}
After each iteration, we have $R_k \not \subseteq span(H_{k-1})$ and $rank(R_k) \geq 1$ by its construction. Therefore $rank(H_k)=rank(R_k \cup H_{k-1})$ keeps increasing and hence Algorithm~\ref{alg:Modified_Kohlberg2} terminates after at most $(n-1)$ iterations. We also note that the algorithm terminates at either step 4 or 5 with complementary conclusions.

Proving that the algorithm correctly decides whether a solution is the nucleolus is equivalent to showing that (a) if $\bx$ is the nucleolus then the Algorithm terminates at step 5, and (b) if the algorithm terminates at step 5, then the input solution must be the nucleolus. We use results from Lemma~\ref{lemma:balancedness} and Theorem~\ref{lemma:balancedness2} for this.

The proof for part (a) is still the same as that proof for Theorem 2 since the key property used in that proof was to keep $H_k$ always $T_0$-balanced. This is summarized as follows. If $\bx$ is the nucleolus then $T_1$ is $T_0$-balanced and the algorithm gets through to step 3 at $k=1$. Suppose, on contradiction, that the algorithm terminate at step 4 at some index $k>1$ with $(H_{k-1} \cup T_k)$ not $T_0$-balanced while $H_{k-1}$ is $T_0$-balanced by the construction in step 3 of the previous iteration. Then, by Lemma~\ref{lemma:Kohlberg}, there exists $\by \in R^n$ such that
\begin{eqnarray*}
  &&\by(\S) \geq 0, \forall \S \in T_0 \cup H_{k-1} \cup T_k;~ \by(N) = 0;~ \by(\S') > 0, \text{ for some } \S' \in T_k.
\end{eqnarray*}
Thus,
\begin{eqnarray*}
\Phi(\bx+\by) <_{L,span(H_{k-1})\cup T_k} \Phi(\bx).
\end{eqnarray*}

In addition, for all $\S \not \in (span(H_{k-1})\cup T_k)$ we have $v(\S) -\bx(\S) < \epsilon_k$ by the construction in step 2. Thus there exist $\delta >0$, which is small enough such that
$$v(\S) -(\bx+\delta \by)(\S) < \epsilon_k,~\forall \S \not \in (span(H_{k-1})\cup T_k)$$
and
\begin{eqnarray*}
\Phi(\bx+\delta \by) <_{L,span(H_{k-1})\cup T_k} \Phi(\bx).
\end{eqnarray*}
In other words, the $|span(H_{k-1})\cup T_k|$ largest excess value of $\bx$ is lexicographically larger than the excess values of $(\bx+\delta \by)$ on these collections of coalitions and the remaining coalitions which means $\bx$ is not the nucleolus. Contradiction.

The proof for part (b) is also the same as that in Theorem 2 where the key property of retaining the rank of $H_k$ increased throughout the algorithm is still preserved. Due to the $T_0$-balancedness of $(H_{k-1} \cup T_k)$, we can use Lemma~\ref{lemma:Kohlberg} to recursively show that $(\bz-\bx)(\S) = 0$ for all $\S \in H_{k-1} \cup T_k$ where $\bz$ is the nucleolus as follows.

Let $\by = \bz - \bx$. We have $\by(\N) =0$ and $\by(\S) \geq 0, \forall \S \in T_0$ due to the fact that $\bz$ is an imputation and the definition of $T_0$. We also have $\by(\S) \geq 0, \forall \S \in R_1$ due to the fact that $\Phi(\bz) \leq_{L,R_1} \Phi(\bx)$ (or otherwise $\bz$ is not the nucleolus).

Applying result from Lemma \ref{lemma:Kohlberg} with a note that $R_1$ is $T_0$-balanced, we have $\by(\S) = 0, \forall \S \in R_1$ which means $\bz$ and $\bx$ are lexicographically equivalent on $R_1$. We will prove by induction that $\by(\S) = 0, \forall \S \in \cup_{j=1}^k R_j$ for all indices $k$ valid in Algorithm~\ref{alg:Modified_Kohlberg2}. Suppose this indeed hold for $(k-1)$, i.e. 
$\by(\S) = 0, \forall \S \in \cup_{j=1}^{k-1} R_j$. In other words, $\bz$ and $\bx$ are lexicographically equivalent on $\cup_{j=1}^{k-1} R_j$ which is the collection of coalitions from which $\bx$ receives the worst excess values. In order for $\bz$ to be at least as good lexicographically as $\bx$, the excess values of $\bz$ on those coalition in $R_k$ must be no worst that those from $\bx$, i.e., $\by(\S) \geq 0, \forall \S \in R_k$. Applying result from Lemma \ref{lemma:Kohlberg} with a note that $\cup_{j=1}^k R_j$ is $T_0$-balanced, we must also have $\by(\S) = 0, \forall \S \in R_k$.

Since $rank(H_{k-1}) = n$, we must have $\bx=\bz$ or $\bx$ is the nucleolus. $\qed$
%With results from Lemmas~\ref{lemma:balancedness} and~\ref{lemma:balancedness2}, the proof for the correctness of the algorithm is similar to that proof for Theorem 2 by noticing the key components of that proof are still preserved. These are: (a) the sets $H_k$ are $T_0$-balanced, (b) the rank of $H_k$ increased throughout the algorithm, and (c) the $T_0$-balancedness condition $(\cup_{j=1}^k T_j)$ is still kept. We skip these details for briefty of the paper.
\end{proof}

\begin{theorem}
The collection of tight coalitions $\cup_{j=1}^k R_j$ stored by Algorithm~\ref{alg:Modified_Kohlberg2} is of size at most $n(n-1)$.
\end{theorem}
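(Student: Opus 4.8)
The plan is to bound the size of the stored collection by controlling the number of coalitions contributed at each iteration and then multiplying by the number of iterations. Both ingredients are already in hand: Theorem~\ref{lemma:balancedness2}b supplies a per-iteration size bound, and the preceding theorem caps the number of iterations of Algorithm~\ref{alg:Modified_Kohlberg2} at $(n-1)$.

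First I would fix an iteration index $j$ and recall that in step 3 the algorithm stores $R_j = rep(T_j; H_{j-1}, T_0)$. Applying Theorem~\ref{lemma:balancedness2}b with $Q = T_j$ and $P = H_{j-1}$, this representative satisfies $1 \leq |R_j| = rank(R_j) \leq rank(T_j)$. The crucial consequence is the equality $|R_j| = rank(R_j)$, which says the coalitions retained at iteration $j$ are linearly independent. Since every coalition is identified with a binary vector in $\R^n$, any collection of coalitions has rank at most $n$; in particular $rank(T_j) \leq n$, so that $|R_j| \leq n$ for every $j$.

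Next I would invoke the bound on the number of iterations: by the preceding theorem the while-loop of Algorithm~\ref{alg:Modified_Kohlberg2} executes at most $(n-1)$ times, so the stored collection is $\cup_{j=1}^k R_j$ with $k \leq n-1$. By subadditivity of cardinality under unions, $|\cup_{j=1}^k R_j| \leq \sum_{j=1}^k |R_j| \leq k\,n \leq (n-1)\,n = n(n-1)$, which is the claimed bound.

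The argument is essentially a direct corollary of the two earlier results, so I do not expect a serious obstacle; the only point deserving care is to emphasize that the bound genuinely rests on Theorem~\ref{lemma:balancedness2}b guaranteeing $|R_j| = rank(R_j)$, i.e.\ that each stored representative is internally independent and hence of size at most $n$. Without replacing each $T_k$ by $R_k$ in step 3, the stored sets could be exponentially large, so this substitution is exactly what forces the linear-in-$n$ per-iteration size. One might hope to sharpen the estimate by noting that the total rank of $\cup_j R_j$ is at most $n$, but since distinct representatives can be linearly dependent across iterations—each $R_j$ is independent only internally—the sum of sizes cannot be collapsed to the total rank, and the counting above yields precisely $n(n-1)$.
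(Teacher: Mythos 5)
Your proof is correct and follows essentially the same route as the paper's: each stored representative satisfies $|R_j| = rank(R_j) \leq n$ by Theorem~\ref{lemma:balancedness2}b (the Caratheodory-based construction), and combining this per-iteration bound with the $(n-1)$ bound on the number of iterations gives $n(n-1)$. Your version is in fact slightly more explicit than the paper's about why $|R_j| \leq n$ and about the subadditivity step, but the underlying argument is identical.
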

\begin{proof}
In the proof of Theorem~\ref{lemma:balancedness2}, note that each $R_j$ is constructed as a subset of another full row rank subset, its size is at most $n$ rows. Since the number of iterations involved is at most $(n-1)$, the total size of $\cup_{j=1}^k R_j$ is at most $n(n-1)$.
$\qed$
%With results from Lemmas~\ref{lemma:balancedness} and~\ref{lemma:balancedness2}, the proof for the correctness of the algorithm is similar to that proof for Theorem 2 by noticing the key components of that proof are still preserved. These are: (a) the sets $H_k$ are $T_0$-balanced, (b) the rank of $H_k$ increased throughout the algorithm, and (c) the $T_0$-balancedness condition $(\cup_{j=1}^k T_j)$ is still kept. We skip these details for briefty of the paper.
\end{proof}
\emph{Remark}: We conjecture that, under mild conditions, the size of $\cup{j=1}^k R_j$ is equal to $(n+k)$. We also conjecture that the algorithms developed can be extended to the case of finding the nucleolus within any polyhedron by replacing the set $T_0$ accordingly. We leave these explorations for future work though.

\section{Conclusion}
The Kohlberg criterion proves to be a powerful tool for verifying whether a payoff distribution is the nucleolus in relatively small games. Its application to larger games is, however, rather limited due to the need for repeatedly forming, storing and checking the balancedness of an exponentially large collection of coalitions for an exponentially large number of iterations. In this work, we simplify the Kohlberg criterion to achieve the following desirable properties: (a) the number of iteration is bounded to at most $(n-1)$, (b) the size of collections of coalitions for storage is at most $n(n-1)$. In addition, we provide a fast algorithm for checking the balancedness. It is expected that the findings will boost the applications of the Kohlberg criterion and possibly provide new directions for finding efficient algorithms to compute the nucleolus.

\section*{Acknowledge}
We thank Dr Holger Meinhardt \cite{meinhardt2017simplifying} for pointing out some of the typos in our ealier working draft of this paper (mostly on handling the $T_0$-balancedness condition which we overlooked when changing the original algorithm for finding the prenucleolus to finding the nucleolus). We however still disagree with Dr Meinhardt's very strong claim on the correctness of the well-established proof technique used in this paper (such as in Theorem~\ref{thm:algo1_correctness}).

\bibliographystyle{plainnat}
\bibliography{bibliography}

\section*{Appendix A: Alternative Proof of Kohlberg Criterion}\label{app:Kohlberg_thm_restate}
 
Let $(T_0,T)$ be two collections of coalitions. For each coalition $C \in T$, let us introduce the following primal LP:

\begin{eqnarray*}
P(C):\quad \max_{\mb{\gamma},\omega,\alpha} &&\omega_C \\
s.t. && \sum_{\S \in T_0} \gamma_\S e(\S)+\sum_{\S \in T} \omega_\S e(\S) -\alpha e(N) = 0,\\
&&\mb{\gamma} \geq 0, \omega \geq 0.
\end{eqnarray*}
 
The corresponding dual problem is:
\begin{eqnarray*}
D(C):\quad \min_{y} &&0 \\
s.t. && y^T e(\S) \geq 0,\forall \S  \in T_0 \cup T,\\
&& y^T e(\N) = 0,\\
&& y^T e(C) \geq 1.
\end{eqnarray*}

We have the following results:

\begin{lemma}%\label{lemma:balance_reduced}
\label{lemma:balancedness2}
For any given pair of subsets $(T_0,T)$ of the powerset $2^N$, the following are equivalent
\begin{itemize}
\item[(a)] $T$ is $T_0$-balanced if any only if for all $C \in T$, the primal problem $P(C)$ is unbounded.
\item[(b)] For any $C \in T$, the primal problem $P(C)$ is unbounded if any only if the dual $D(C)$ is infeasible.
\item[(c)] The primal problem $D(C)$ is infeasible for all $C \in T$ if any only if $(T_0,T)$ has property II.
\item[(d)] $(T_0,T)$ has property II if and only if $T$ is $T_0$-balanced. 
\end{itemize}
\end{lemma}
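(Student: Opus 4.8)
The plan is to verify the four biconditionals (a)--(d) individually; chained together they yield the master equivalence ``$T$ is $T_0$-balanced $\Leftrightarrow$ $(T_0,T)$ has Property II'', i.e.\ an LP-duality re-derivation of the Kohlberg criterion. The engine is the observation that the feasible region of each $P(C)$ is a cone through the origin, combined with weak/strong LP duality relating $P(C)$ to $D(C)$.

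For (a), I would first note that since the feasible set of $P(C)$ is a cone and the origin $(\mb{\gamma},\mb{\omega},\alpha)=0$ is feasible, the optimal value is either $0$ or $+\infty$; hence $P(C)$ is unbounded if and only if there is a feasible point with $\omega_C>0$. The forward direction is then immediate: a $T_0$-balancing certificate $(\mb{\gamma},\mb{\omega})$ with $\mb{\omega}>0$ gives, on setting $\alpha=1$, a feasible point of $P(C)$ with $\omega_C>0$ for every $C$. For the converse I would, for each $C$, take a feasible point with $\omega_C>0$ and argue that its scalar $\alpha$ is strictly positive by summing the coordinates of the constraint $\sum_{\S\in T_0}\gamma_\S e(\S)+\sum_{\S\in T}\omega_\S e(\S)=\alpha\,e(N)$: the left-hand side equals $\sum\gamma_\S|\S|+\sum\omega_\S|\S|>0$ (all coalitions being nonempty and $\omega_C>0$), while the right-hand side is $\alpha n$. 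Normalizing each certificate to $\alpha=1$ and averaging over all $C\in T$ then produces a single feasible point whose $\omega$-component is strictly positive in \emph{every} coordinate $\S$ (coordinate $\S$ inherits a positive contribution from the $C=\S$ term), which is exactly a $T_0$-balancing weight vector.

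Part (b) is a direct application of LP duality to the pair $P(C)$--$D(C)$: one checks that $D(C)$ is indeed the dual (the equality constraint gives a free $y$, the sign-constrained $\gamma_\S,\omega_\S$ give $y^Te(\S)\ge 0$, the objective coefficient on $\omega_C$ gives $y^Te(C)\ge 1$, and the free $\alpha$ with column $-e(N)$ forces $y^Te(N)=0$), and since $P(C)$ is always feasible, the unboundedness/strong-duality dichotomy yields ``$P(C)$ unbounded $\Leftrightarrow$ $D(C)$ infeasible''. For (c) I would unpack feasibility of $D(C)$: by rescaling $y$, it is feasible precisely when some $y\in Y(T_0\cup T)$ has $y(C)>0$. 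Consequently $D(C)$ is infeasible for every $C\in T$ exactly when every $y\in Y(T_0\cup T)$ has $y(C)\le 0$; combined with the defining constraint $y(C)\ge 0$ of $Y(T_0\cup T)$ this forces $y(\S)=0$ for all $\S\in T$ and all $y\in Y(T_0\cup T)$ — precisely the Property condition appearing in Lemma~\ref{lemma:Kohlberg}. Finally (d) is the transitive closure: chaining (a), (b), (c) gives ``$T_0$-balanced $\Leftrightarrow$ Property II'', re-deriving the criterion without any circular appeal to Lemma~\ref{lemma:Kohlberg}.

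The step I expect to be the main obstacle is the converse of (a): promoting the \emph{per-}$C$ existence of a feasible point with $\omega_C>0$ to a \emph{single} certificate that is strictly positive in all coordinates. The averaging trick resolves it, but it hinges on first establishing $\alpha>0$ via the coordinate-sum argument and the nonemptiness of the coalitions; without that normalization the averaged point need not correspond to an equation of the form $e(N)=\sum\gamma_\S e(\S)+\sum\omega_\S e(\S)$. Steps (b)--(d) are then routine once the dual is correctly identified and $D(C)$-feasibility is translated into the language of $Y(T_0\cup T)$.
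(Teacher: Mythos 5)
Your proposal is correct and follows essentially the same route as the paper's own proof: the cone/homogeneity dichotomy for $P(C)$, the averaging of per-$C$ certificates to get a strictly positive weight vector in the converse of (a), the duality dichotomy for (b), the rescaling argument translating $D(C)$-feasibility into the $Y(T_0\cup T)$ language for (c), and chaining for (d). In fact your explicit justification that $\alpha>0$ (via summing coordinates of the constraint) before normalizing to $\alpha=1$ patches a step the paper silently assumes when it writes the feasible point as $(\mb{\gamma},\mb{\omega},1)$.
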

\begin{proof}$~~~$
Result in part (d) is what we want to show and this will follows directly if we are able to show (a)-(c). We choose to show both sides of the if and only if statements in part (a)-(c) so that each of these can be viewed as stand-alone results eventhough the proof of the entire lemma only requires one direction such as (a)$\Rightarrow$(b)$\Rightarrow$(c)$\Rightarrow$(d)$\Rightarrow$(a). 
\begin{itemize}
\item[(a)] 
$\Rightarrow$  If $T$ is $T_0$-balanced, there exists weight vectors $\mb{\gamma} \in \R_{\geq 0}^{|T_0|}$,$\mb{\omega}  \in \R_{>0}^{|T|}$ such that 
$$\sum_{\S \in T_0} \gamma_\S e(\S) +\sum_{\S \in T} \omega_\S e(\S) = e(N).$$
For each $C \in T$, we have $(\mb{\gamma},\mb{\omega},1)$ is a feasible solution to $P(C)$ with an objective value of $\omega_C >0$. Since the problem is homogeneous on $(\mb{\gamma},\mb{\omega}, \alpha)$, that is for all $\Delta >0$, we have $(\Delta \mb{\gamma}, \Delta \mb{\omega}, \Delta \alpha)$ is also a feasible solution with an optimal value of $\Delta \omega_C$. Thus, the primal problem is unbounded and hence the dual problem $P(C)$ is infeasible.

$\Leftarrow$
For each $C \in T$, given the primal problem $P(C)$ is unbounded, we can pick a corresponding feasible solution $(\mb{\gamma},\mb{\omega},1)$ with a positive objective value $\omega_C$. Average out all such feasible solutions $(\mb{\gamma},\mb{\omega},1)$, one for each $C \in T$, we would obtain the average weight $(\bar{\mb{\gamma}},\bar{\mb{\omega}})$ that satisfies $\bar{\mb{\gamma}} \geq 0$, $\bar{\mb{\omega}} >0$, and
$$\sum_{\S \in T_0} \bar{\gamma}_\S e(\S) +\sum_{\S \in T} \bar{\omega}_\S e(\S) = e(N)$$ 
Thus, $T$ is $T_0$-balanced.

\item[(b)] We can see that the primal problem is alway feasible at $(\mb{\gamma}=0, \mb{\omega} = 0, \alpha = 0)$. In addition, the problem is homogeneous on $(\mb{\gamma}, \mb{\omega}, \alpha)$ and hence its optimal value is either zero or positive infinitive (unbounded). The dual problem, on the other hand, is either infeasible or with an optima value of zero. From linear programming duality results, it is easy to show that in this case, the primal is unbounded if and only if the dual is infeasible.

\item[(c)]
$\Leftarrow$ If $(T_0,T)$ has property II, we have $D(C)$ infeasible for all $C \in T$ by definition of property II.

$\Rightarrow$ If $D(C)$ infeasible for all $C \in T$ then $(T_0,T)$ must have property II since otherwise there exists a $y \in Y(T_0 \cup T)$ and a coalition $C \in T$ such that $y(C) >0$. Thus, we can scale up $y$ by an appropriate factor $\Delta$ such that $\Delta y \in Y(T_0 \cup T)$ and $\Delta y(C) \geq1$. This means the dual problem $D(C)$ is feasible. Contradiction!

$\qed$
\end{itemize}
\end{proof}

\end{document}